\documentclass[11pt,a4paper]{article}
 \usepackage{indentfirst, latexsym,bm}
 \usepackage{amsmath}
 \usepackage{pifont}
 \usepackage{amsfonts}
 \usepackage{mathrsfs}
 \usepackage{array}
 \usepackage{multirow} 
 \usepackage{graphicx}
 \usepackage{subfigure}
 \usepackage{picinpar}
 \usepackage{setspace}
 \RequirePackage[colorlinks,citecolor=blue,urlcolor=blue,linkcolor=blue]{hyperref}
 \usepackage[top=2cm,bottom=2cm, outer=2cm, inner=2cm]{geometry}
 
 \usepackage{caption}
 \usepackage[labelfont={bf,small},textfont={small}]{caption}
 
 \RequirePackage[numbers]{natbib}
 \usepackage{enumitem} 
 
 \usepackage{booktabs}
 \usepackage{authblk}
 
 \usepackage{threeparttable}
 \usepackage{mathrsfs,amsfonts,amsmath}
 
 \usepackage{color}
 
 \makeatletter
 \def\namedlabel#1#2{\begingroup
 	#2%
 	\def\@currentlabel{#2}%
 	\phantomsection\label{#1}\endgroup
 }
 \makeatother

 \numberwithin{figure}{section}

 \newcommand\email[1]{\href{mailto:#1}{ \nolinkurl{#1}}}

 \newtheorem{theorem}{Theorem}[section]
 \newtheorem{definition}[theorem]{Definition}
 \newtheorem{lemma}[theorem]{Lemma}
 \newtheorem{corollary}[theorem]{Corollary}
 \newtheorem{proposition}[theorem]{Proposition}
 \newtheorem{remark}[theorem]{Remark}
 \newtheorem{condition}[theorem]{Condition}
 \newtheorem{example}{Example}[section]

 \def\blemma{\begin{lemma}}\def\elemma{\end{lemma}}
 \def\bproposition{\begin{proposition}}\def\eproposition{\end{proposition}}
 \def\ttheorem{\begin{theorem}}\def\etheorem{\end{theorem}}
 \def\bcorollary{\begin{corollary}}\def\ecorollary{\end{corollary}}
 \def\bremark{\begin{remark}}\def\eremark{\end{remark}}
 \def\bcondition{\begin{condition}}\def\econdition{\end{condition}}

 \def\benumerate{\begin{enumerate}}\def\eenumerate{\end{enumerate}}
 \def\bitemize{\begin{itemize}}\def\eitemize{\end{itemize}}

 \def\beqlb{\begin{eqnarray}}\def\eeqlb{\end{eqnarray}}
 \def\beqnn{\begin{eqnarray*}}\def\eeqnn{\end{eqnarray*}}
 \def\ar{\!\!\!&}

 \def\proof{\noindent{\it Proof.~~}}\def\qed{\hfill$\Box$\medskip}

\begin{document} 
 
 \title{\bf Uniform Local Asymptotics for L\'evy Processes with Subexponential Jumps}
	
 \author{Hao Wu\footnote{School of Mathematical Sciences, Nankai University, China; email: wuhao.math@outlook.com} 
 \quad and\quad 
 Wei Xu\footnote{School of Mathematics and Statistics, Beijing Institute of Technology,  China; email: xuwei.math@gmail.com. Xu gratefully acknowledges financial support from the National Natural Science Foundation of China (No. 11531001) and the National Key R\&D Program of China (No. 2023YFA1010103).} 
 }  

 \date{}
	
 \maketitle
 \begin{abstract}
 This paper is devoted to unifying the uniform local large-deviation asymptotics for a centered L\'evy process $X$ with subexponential jumps. 
 Our results assert that for any $\theta,\delta_0 >0$ and $K\geq 0$,
 \begin{equation*}
 \lim_{t\to\infty}\sup_{x\geq \theta t}  \sup_{|y|\leq Kb(x)} \sup_{\delta\in [\delta_0,\infty]} 	\sup_{0<s\leq t} \bigg| \frac{\mathbf{P}\big( X_s\in  (x-y,x-y+\delta] \big)}{ s\cdot \mathbf{P}\big( X_1\in  (x,x+\delta] \big) } -1 \bigg|  =0,
 \end{equation*}
 where the natural-scale function $b$ satisfies a polynomial growth condition. This provides a continuous-time and simultaneously uniform analogue of the results of Denisov et al. [Ann. Probab., 2008], while being established under a weaker moment assumption.
 
 \bigskip
 
 \noindent {\it MSC 2020 subject classifications:} 60G51, 60F10.
 
 \smallskip
 
 \noindent  {\it Keywords and phrases:} Large deviation,  L\'evy process, local probability, subexponentiality.

 \end{abstract}

  \section{Introduction} 
 \label{Sec.Introduction}
 \setcounter{equation}{0}
 
 Heavy-tailed large deviations have been extensively studied and also widely used in various fields over the past decades. 
 Unlike the classical Cram\'er regime, where large deviations are typically realized through an accumulation of many moderate fluctuations, heavy-tailed large deviations are governed by a fundamentally different mechanism, commonly referred to as the \textsl{big-jump principle}. 
 For a random walk $S$ started from $0$, this principle asserts that a large deviation is typically produced by one exceptionally large step, while the remaining steps fluctuate on a substantially smaller scale. At the level of tail probabilities, this phenomenon leads, under subexponentiality, to asymptotic relations of the form
  \beqnn
 \mathbf P\big(S_n>x\big)\sim n\cdot \mathbf{P}\big(S_1>x\big),\quad \mbox{as }x \to\infty.
 \eeqnn
  A substantially finer question is whether an analogous relation remains valid locally, namely whether  
  \beqlb\label{eqn.001}
  \mathbf P\big(S_n\in x+\Delta_\delta\big)
  \sim n\cdot \mathbf P\big(S_1\in x+\Delta_\delta\big) 
  \quad\mbox{with} \quad x+\Delta_\delta=(x,x+\delta],
  \eeqlb
  uniformly throughout an appropriate large-deviation region. 
  Local asymptotics of this type are considerably more sensitive to spatial perturbations than their tail counterparts and therefore require, in addition to subexponentiality, suitable local regularity or insensitivity assumptions.

  A systematic theory of local subexponential distributions was developed by Asmussen et al. \cite{AsmussenFossKorshunov2003}. 
   They showed that much of the ordinary subexponential theory extends to this local setting and obtained applications to random walks, compound Poisson distributions, infinitely divisible distributions and renewal theory. 
  For random walks, a particularly general description of the big-jump domain was obtained by Denisov et al. \cite{DenisovDiekerShneer2008}. 
  They identified general conditions under which the approximation \eqref{eqn.001}  holds uniformly in the linear region $x\ge \theta n$ for any fixed $\delta \in(0,\infty]$, provided an appropriate moment condition and an insensitivity condition on the scale of the typical fluctuations of the remaining increments are satisfied.  
  Later, Denisov et al. \cite{DenisovFossKorshunov2010} investigated the one-big-jump principle for random walks stopped at an independent random time $\tau$. 
  Under appropriate assumptions, they proved that 
 \beqnn
  \mathbf P(S_\tau>x) \sim \mathbf E[\tau]\cdot\mathbf P\big(S_1> x\big),
  \eeqnn
  and also obtained related asymptotics for the maximum.  
  Furthermore, Denisov et al. \cite{DenisovVatutinWachtel2014}  studied local probabilities for a random walk with negative drift $\beta<0$ conditioned to stay positive. 
  Under the assumptions similarly as in \cite{DenisovDiekerShneer2008}, they proved that 
  \beqnn
  \mathbf P\big(S_n\in x+\Delta_\delta, \, \tau_0>n\big) \sim \mathbf{E}\big[\tau_0\big]\cdot \mathbf{P} \big( S_1\in x-\beta n +\Delta_\delta\big)
  \quad \mbox{with}\quad
  \tau_0= \inf\{n\geq1:S_n< 0\}.
  \eeqnn
  
The corresponding question for L\'evy processes is closely related to, but distinct from, the classical theory of subexponential infinitely divisible distributions (IDDs). 
Beginning with Embrechts et al. \cite{EmbrechtsGoldieVeraverbeke1979},  a substantial literature has established conditions under which the tail of an IDD is asymptotically equivalent to the tail of its L\'evy measure. 
Applied to a L\'evy process with L\'evy measure $\nu(dy)$, results of this type naturally lead under suitable assumptions to fixed-time relations of the form
\beqnn
\mathbf P\big(X_t\in x+\Delta_\delta\big)
\sim t\cdot \nu\big(x+\Delta_\delta\big),
\qquad x\to\infty.
 \eeqnn
This was improved in \cite{DoneyJones2012} at the level of tail probabilities to a uniform asymptotic for $x \geq \theta t$ as $t \to \infty$.
 Heavy-tailed asymptotics for L\'evy processes have also been studied extensively for path-dependent quantities. 
 In the subexponential setting, precise asymptotics are available for suprema of negatively driven L\'evy processes, including results uniform in the time horizon; see e.g. \cite{Korshunov2018}.  
 Passage times, overshoots and pre-passage positions under subexponential assumptions have been investigated in \cite{DoneyKluppelbergMaller2016}. A different line of work develops sample-path large-deviation principles for L\'evy processes with regularly varying or other heavy-tailed L\'evy measures including extrema, passage events and rare trajectories; see \cite{RheeBlanchetZwart2019} and reference therein.   
 
 This paper aims to establish a uniform local large-deviation principle for one-dimensional centered L\'evy processes, providing a continuous-time analogue of Corollary~2.1 in \cite{DenisovDiekerShneer2008}.  
 We should emphasize that a continuous-time analogue of that result has already been proposed by Xu \cite{Xu2021a} in the regularly varying setting; however, the proof given there is not rigorous. 
 
 \subsection{Main results}
 
 Consider a one-dimensional L\'evy process $X = \{X_t: t\ge 0\}$ defined on a probability space $(\Omega,\mathscr{F},\mathbf{P})$, with zero mean, 
 diffusion coefficient $\sigma\geq 0$ and \textsl{L\'evy measure} $\nu(dy)$ satisfying 
 \beqnn
 \int_{\mathbb{R}}\big(|y|\wedge |y|^2\big)\,\nu(dy)<\infty.
 \eeqnn

 To formulate the natural conditions on the L\'evy measure, we first introduce three classes of non-negative functions $f$ on $\mathbb{R}$:
 \begin{enumerate}
 	\item[$\bullet$] $\mathcal{AD}$: the class of \textsl{almost decreasing} functions, i.e., $	f(x) \geq c\cdot f(y)$ for some $c>0$ and all large $y\geq x>0$. 
 	
 	\item[$\bullet$] $\mathcal{S}d$: the class of \textsl{subexponential} functions, i.e., for all $y\in\mathbb{R}$, as $x\to\infty$,
 	\beqnn
 	\frac{1}{f(x)} \int_0^xf(x-z)f(z)dz \sim 2 \int_0^\infty f(z)dz<\infty
 	\quad \mbox{and}\quad 
 	f(x+y)\sim f(x).
 	\eeqnn
 	
 	\item[$\bullet$] $\mathcal{OR}$:  the class of \textsl{O-regularly varying} functions, i.e., for all $\lambda \geq 1$, 
 	\beqnn
 	0<\liminf _{x \to \infty} \frac{f(\lambda x)}{f(x)} \leq \limsup _{x \to \infty} \frac{f(\lambda x)}{f(x)}<\infty . 
 	\eeqnn 
 \end{enumerate}
 
  \begin{condition}\label{MainCondition01}
  Recall $\Delta_\delta: =(0,\delta]$ for  $\delta\in(0,\infty]$.  The following hold for some constant $\alpha\in(1,2]$.
  	\begin{enumerate}
  		\item[(1)] {\rm(One-sided moment)} Assume that $\mathbf{E}\big[(X_1^+)^\alpha\big]<\infty$ with $X_1^+=X_1\vee 0$, which is equivalent to  
  		\beqnn
  		\int_1^\infty x^\alpha\,\nu(dx)<\infty . 
  		\eeqnn
  		
  		\item[(2)] {\rm(Fluctuation scale)} There exists an eventually non-decreasing positive function $b\in \mathcal{OR}$  such that 
  		\beqlb\label{eqn.Con01}
  		\liminf_{t\to\infty} \frac{b(t)}{t^{1/\alpha}} >0,\quad 
  		\limsup_{t\to\infty} \frac{b(t)}{t} =0
  		\quad \mbox{and}\quad 
  		\limsup_{K\to\infty} \limsup_{t\to\infty} \mathbf{P}\big( |X_t|\geq Kb(t) \big) =0.
  		\eeqlb
  		
  		\item[(3)] {\rm(Local heavy-tail)} For any $\delta >0$, assume that $\nu\big(x+\Delta_\delta\big) \in \mathcal{AD}$, $x^{\alpha}\cdot \nu\big(x+\Delta_\delta\big) \in \mathcal{S}d\cup\mathcal{OR}$ and 
  		\beqlb\label{eqn.Con02}
  		\limsup_{x\to\infty} \sup_{|y|\leq Kb(x)}  \bigg| \frac{ \nu\big( x-y+\Delta_\delta\big) }{\nu\big( x+\Delta_\delta\big)}-1\bigg| =0,\quad \forall\, K\geq 0. 
  		\eeqlb 
  		\end{enumerate}
 \end{condition}
 
 The one-sided moment condition on the positive part is natural in our study of large positive deviations of $X$. 
 Together with Chebyshev's inequality, it implies that $\mathbf{P}\big( X_t\geq Kb(t) \big) \to 0$ as $t\to\infty$ and then $K\to\infty$. 
 Hence  the last limit in \eqref{eqn.Con01} can be weakened to 
 \beqnn
 \limsup_{K\to\infty} \limsup_{t\to\infty} \mathbf{P}\big( X_t\leq -Kb(t) \big) =0.
 \eeqnn
 The almost decrease property of $\nu\big(x+\Delta_\delta\big)$ rules out excessively large oscillations of the local Lévy mass in the positive tail. 
 The heavy-tail regularity condition $\nu\big(x+\Delta_\delta\big) \in \mathcal{AD}$ and $x^{\alpha}\cdot \nu\big(x+\Delta_\delta\big) \in \mathcal{S}d\cup\mathcal{OR}$  appears frequently in local large-deviation theory, as it captures the characteristic convolution behavior underlying the one-big-jump mechanism. 
 The limit \eqref{eqn.Con02}, usually referred to as the \textsl{insensitivity condition}, states that the local L\'evy mass near the level $x$ is asymptotically unaffected by perturbations whose magnitude is at most a fixed multiple of $b(x)$. 
 A direct probabilistic interpretation is that, in a one-big-jump realization of the event $\{X_t\in x+\Delta_\delta\}$, the exceptional positive jump produces the displacement of order $x$, whereas the remaining part of the process contributes a fluctuation of order $b(x)$.

 These assumptions are closely related to, but differ in several respects from, those appearing in Corollary~2.1 in \cite{DenisovDiekerShneer2008}. 
 First, our one-sided moment condition is weaker than the two-sided moment assumption $\mathbf{E}\big[|S_1|^\alpha\big]<\infty$ employed in \cite{DenisovDiekerShneer2008}, which  allows $n^{1/\alpha}$  to serve as the natural scale sequence for random walks. 
 In our L\'evy setting, we decouple this fluctuation requirement from the positive-tail moment condition: the negative tail of $X$ is permitted to generate fluctuations on a scale larger than $t^{1/\alpha}$, provided these fluctuations remain controllable by a sublinear scale $b(t)$, while  the finite positive $\alpha$-moment--which underpins the one-big-jump mechanism on the positive side--is retained. 
 Second, their assumption that $\mathbf P(S_1\in x+\Delta_\delta)$ is insensitive to shifts of order $x^{1/\alpha}$ is replaced here by insensitivity of the local L\'evy mass $\nu(x+\Delta_\delta)$ to perturbations of order $b(x)$. This allows the fluctuation scale to be adapted to the L\'evy process rather than fixed a priori at  $x^{1/\alpha}$. 
 Finally, while the random-walk result is formulated for a fixed interval $\Delta_\delta$, our assumptions are imposed for every $\delta>0$ and yield an approximation uniform over all $\delta\geq \delta_0$. 
 The truncation and small-steps sequences entering the general theorem in \cite{DenisovDiekerShneer2008} do not appear explicitly in our statement; their roles are instead taken over by the L\'evy--It\^o decomposition and the corresponding estimates for the compound-Poisson large-jump component and the remaining small-fluctuation component. 
  	
 The relation between Conditions~(2) and (3) is fundamental. 
 Condition~(2) asserts that the contribution of the ordinary part of the L\'evy process is typically of order at most $b(t)$, whereas Condition~(3) asserts that the local L\'evy measure is insensitive to perturbations on precisely this scale. Since $b(t)=o(t)$, these fluctuations are much smaller than the large-deviation levels $x\geq \theta t$.  
 Our main theorem therefore formalizes the following one-big-jump mechanism: a local large deviation of $X_s$ to a level of order $t$ is generated by one exceptionally large positive jump, while all remaining fluctuations occur on the smaller $b$-scale and have no first-order effect on the required size of that jump. 
  	
  	 \begin{theorem}\label{Main.Thm} 
  		Under Condition~\ref{MainCondition01},	for any constants $\theta,\delta_0>0 $ and $K\geq 0$, we have as $t\to\infty$, 
  		\beqlb\label{main.eqn}
  	\sup_{x\geq \theta t}  \sup_{|y|\leq Kb(x)} \sup_{\delta\in [\delta_0,\infty]}	\sup_{0<s\leq t} \bigg| \frac{\mathbf{P}\big( X_s\in  x-y+\Delta_\delta \big)}{ s\cdot  \nu\big(x+\Delta_\delta\big) } -1 \bigg| \to 0. 
  		\eeqlb
  	\end{theorem}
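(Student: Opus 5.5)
The plan is to use the L\'evy--It\^o decomposition with a level-dependent truncation. Fix a cutoff $r=r(t)$ with $b(t)\ll r(t)\ll t$; concretely, take $r(t)=\varepsilon t$ for a small $\varepsilon>0$ that is sent to $0$ at the end. Write $X_s=Y_s+Z_s$. Here $Z$ is the compound Poisson process of jumps exceeding $r$, with rate $\lambda_r=\nu((r,\infty))$ and jump law $\nu(\cdot\cap(r,\infty))/\lambda_r$, and $Y$ is everything else (jumps $\le r$, together with compensation and the Gaussian part). The two are independent. Since $\int_1^\infty x^\alpha\nu(dx)<\infty$, we have $s\lambda_r\le t\,\nu((\varepsilon t,\infty))=o(t^{1-\alpha})\to0$. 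Conditioning on the number $N_s$ of big jumps gives
\[
\mathbf P(X_s\in x-y+\Delta_\delta)=\sum_{k\ge0}\mathbf P(N_s=k)\,\mathbf P\big(Y_s+\xi_1+\dots+\xi_k\in x-y+\Delta_\delta\big).
\]
I then treat the terms $k=1$, $k=0$ and $k\ge2$ separately.

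\textbf{The term $k=1$ (main term).} Its weight is $s\lambda_r e^{-s\lambda_r}\sim s\lambda_r$, so the contribution is approximately
\[
s\int_{\mathbb R}\mathbf P(Y_s\in dz)\,\nu\big(x-y-z+\Delta_\delta\big)\mathbf 1_{\{x-y-z>r\}} .
\]
On the event $|Y_s|\le Kb(t)\le K'b(x)$, the insensitivity condition \eqref{eqn.Con02}, applied with constant $K+K'$, shows that the integrand is $(1+o(1))\nu(x+\Delta_\delta)$, uniformly in $|y|\le Kb(x)$. Uniformity in $\delta\ge\delta_0$ follows by splitting $\Delta_\delta$ into finitely many pieces of length in $[\delta_0,2\delta_0]$ plus a tail. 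The tail $(x+M,\infty]$ is handled with the $\mathcal{AD}$ and $x^\alpha$-regularity properties, which give $\nu((x+M\delta_0,\infty))$ negligible relative to $\nu(x+\Delta_\delta)$ only after summing; equivalently, one can argue directly on the partition. Condition~(2) makes $\mathbf P(|Y_s|>Kb(t))$ small, uniformly in $s\le t$, once $K$ is large. Here I need that $Y$ inherits the tightness of $X$ on the $b$-scale, which holds because $\mathbf P(Z_s\ne0)\to0$. For this to be uniform in $s\le t$ rather than only along $s=t$, I would use $\mathcal{OR}$ and monotonicity of $b$, together with a L\'evy--Ottaviani-type maximal inequality.

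The remaining part of the $k=1$ term is the region $|z|>Kb(t)$. Here I bound $\nu(x-y-z+\Delta_\delta)\le C\nu(x+\Delta_\delta)$ for $z\le x/2$, using $\mathcal{AD}$ and $\mathcal{OR}$ or $\mathcal Sd$. For $z>x/2$ I need a large-deviation bound for the truncated process,
\[
\mathbf P(Y_s>x/2)\le \big(Cs/x^\alpha\big)^{1/(2\varepsilon)},
\]
obtained by a Fuk--Nagaev / exponential Chebyshev argument with jumps bounded by $\varepsilon t$. Choosing $\varepsilon$ small makes this $o(s\nu(x+\Delta_\delta))$, because $x^\alpha\nu(x+\Delta_\delta)$ decays at most polynomially: for $\mathcal{OR}$ this holds by Matuszewska indices, and for $\mathcal Sd$ it is subexponential, which is slower than any polynomial power after the Fuk--Nagaev bound is raised to a large power.

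\textbf{The terms $k=0$ and $k\ge2$.} The $k=0$ term is $\mathbf P(Y_s\in x-y+\Delta_\delta)\le \mathbf P(Y_s>x/2)$. By the same Fuk--Nagaev bound it is negligible. I expect this to be the main obstacle. For small $s$ (say $s\le1$) there is no $b$-scale room, so the bound must be a genuinely local estimate. I would handle this by refining the truncation for small $s$: use jumps up to level $\varepsilon x$, and compare $s\nu$-mass against a tail bound like $(s/x^{\alpha})^{m}$ with $m\ge2$. This still needs the local mass $\nu(x+\Delta_{\delta_0})$ to dominate polynomially-decaying quantities, which the $\mathcal{OR}$/$\mathcal Sd$ assumption on $x^\alpha\nu$ supplies.

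For $k\ge2$, the weight is $O((s\lambda_r)^2)$. The convolution of two or more big jumps landing in $x-y-z+\Delta_\delta$ is controlled by $C\,\nu(x+\Delta_\delta)/\lambda_r\cdot\lambda_r^{k-1}\cdot C^k$. In the $\mathcal Sd$ case this follows from the local subexponential convolution property (Asmussen et al.\ style Kesten bound). In the $\mathcal{OR}$ case it follows from the fact that each jump is at least $\varepsilon t$ while one of them must be at least $x/k$. The sum over $k\ge2$ is therefore $o(s\nu(x+\Delta_\delta))$ because $s\lambda_r\to0$.

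Finally, letting $t\to\infty$, then $K\to\infty$ and $\varepsilon\to0$, gives \eqref{main.eqn}.
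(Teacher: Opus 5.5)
Your one-big-jump scheme with cutoff $r=\varepsilon t$ can plausibly be carried through when $x^\alpha\nu(x+\Delta_\delta)\in\mathcal{OR}$. It has a genuine gap in the $\mathcal{S}d$ half of Condition~\ref{MainCondition01}(3), where two of your steps fail.

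First, in the $k=1$ term you bound $\nu(x-y-z+\Delta_\delta)\le C\,\nu(x+\Delta_\delta)$ for $Kb(t)<z\le x/2$. Take $\nu(dy)=e^{-y^\beta}dy$ on $(1,\infty)$ with $\beta<1/2$, no negative jumps, $\alpha=2$ and $b(t)=\sqrt t$. All of Condition~\ref{MainCondition01} holds, yet $\nu(x/2+\Delta_\delta)/\nu(x+\Delta_\delta)\approx e^{(1-2^{-\beta})x^\beta}\to\infty$. In general, insensitivity only gives the growth bound $\nu(x-z+\Delta_\delta)/\nu(x+\Delta_\delta)\le e^{\eta(z/b(x)+1)}$ (Proposition~\ref{Prop.203}). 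That growth must be paid for by exponential decay of the non-big-jump part on the $b$-scale. Your $Y$ has no such decay: it carries jumps up to $\varepsilon t$, so $\mathbf P(Y_s>z)$ decays no faster than $s\,\nu((2z,\varepsilon t])$. Configurations with one $Y$-jump of size $u\in[Kb(t),\varepsilon t]$ plus a big jump near $x-u$ are negligible only through the subexponential convolution property of $f(x)=x^\alpha\nu(x+\Delta_\delta)$, and you never invoke it there.

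Second, for the $k=0$ term (and the region $z>x/2$) your Fuk--Nagaev bound $(Cs/x^\alpha)^{1/(2\varepsilon)}$ is polynomial in $x$. Your justification is backwards: $\mathcal{S}d$ means slower than exponential, not slower than every power, since $e^{-x^\beta}=o(x^{-p})$ for all $p$. The only lower bound the hypotheses provide is $\nu(x+\Delta_\delta)\ge e^{-Cx^{1-1/\alpha}}$ (Proposition~\ref{Prop.202}), and no polynomial bound beats it. Your refined cutoff at $\varepsilon x$ for small $s$ has the same defect.

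The paper avoids both problems by cutting at the fixed level $1$ instead of $\varepsilon t$. The part $M$ with jumps at most $1$ then satisfies $\sup_t\mathbf E[e^{M_t/b(t)}]<\infty$ (Lemma~\ref{Lemma.302}(2)). Hence $\mathbf P(M_s\ge kb(x))\le Ce^{-c_0k}$, which absorbs the $e^{\eta k}$ growth once $\eta<c_0$. $M$ also has the tail bound $\mathbf P(M_t>y)\le Ce^{-cy}$ for $y\ge\eta t$ and a super-exponential bound for bounded $s$ (Lemma~\ref{Lemma.302}(3),(4)); both beat $e^{-Cx^{1-1/\alpha}}$. All convolution work for jumps exceeding $1$, including your ``medium'' jumps, is delegated to the local random-walk theorems of Denisov, Dieker and Shneer (truncation sequence $n^{1/\alpha}$) and of Asmussen, Foss and Korshunov, then Poissonized in Lemma~\ref{Lemma.301}.

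To repair your route you would need one of two things:
\begin{itemize}
\item lower the cutoff to a level where the remainder keeps exponential moments on the $b$-scale; or
\item prove a local subexponential convolution (Kesten-type) estimate for the medium jumps, uniformly in $t$.
\end{itemize}

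Separately, the extension to $\delta\in[\delta_0,\infty]$ needs no tail estimate. Once the bound holds uniformly over all $x\ge\theta t$ for $\delta\in[1,2]$, shift $x\mapsto x+k$ and sum the relative errors over a partition into unit-length pieces. This gives the same error for every $\delta$, as in Step~2 of the paper.
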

  	
 In particular, the limit \eqref{main.eqn} implies that $\mathbf{P}\big( X_1\in  x+\Delta_\delta \big) \sim  \nu\big(x+\Delta_\delta\big)$ uniformly in $\delta\in [\delta_0,\infty]$ as $x\to\infty$. 
 Consequently, the approximation
  	\beqnn
  	\mathbf{P}\big( X_s\in  x-y+\Delta_\delta \big) 
  	\sim  s\cdot  \mathbf{P}\big( X_1\in  x-y+\Delta_\delta \big)
  	\eeqnn 
 holds simultaneously with respect to the observation time $s\in(0,t]$, the large-deviation level $x\ge\theta t$, the spatial perturbation $|y|\le Kb(x)$, and the interval length $\delta\ge\delta_0$. 
 This uniform simultaneity is stronger than the corresponding result for random walks in Denisov et al.  \cite{DenisovDiekerShneer2008}, where the analogous approximation \eqref{eqn.001} is established only uniformly in the spatial variable $x\geq \theta n$.
 Accordingly, our theorem should be viewed as a continuous-time and fully uniform counterpart of the local big-jump principle in \cite{DenisovDiekerShneer2008}, rather than as a mere substitution of $n$ by $t$.
  	
  	  \smallskip
  	{\it \textbf{Organization of this paper.}}  
  	In Section~\ref{Sec.AuxiliaryResult}, we collect several auxiliary results on the local asymptotics of the L\'evy measure. Section~\ref{Sec.Proofs} then presents the detailed proof of Theorem~\ref{Main.Thm}, with the proofs of two important auxiliary lemmas postponed to the two subsections that follow.

  	\medskip

  	\noindent \textbf{Notation.} 
  	For any $x\in\mathbb{R}$,  let $[x]$ be its integer part and $x^+ := x\vee 0=\max\{x,0\}$.
  	We make the convention that for any $s\leq t\in\mathbb{R}$,
  	\begin{equation*}
  		\int_{s}^{t}=\int_{(s,t]}\quad \mbox{and} \quad \int_{t}^{\infty}=\int_{(t,\infty)}.
  	\end{equation*}

 \section{Some auxiliary results} 
 \label{Sec.AuxiliaryResult}
 \setcounter{equation}{0}
 
 In this section, we provide several auxiliary results for the L\'evy measure that play a very important role in the proof of our main theorem. 
 Without loss of generality, in the sequel we may make the convention that 
 \begin{center}
 	{\bf (i) $\overline{\nu}(1)=1$;\quad  (ii) $b$ is non-decreasing on $\mathbb{R}_+$  and $b(0)=1$. } 
 \end{center}
 The next proposition follows directly from the non-decreasing and $O$-regularly varying property of $b$.  The detailed proof is omitted. 
 \begin{proposition}\label{Prop.204}
 	For any $\eta \geq 1$, there exist two constants
 	$C_\eta,x_0 \geq 1$ such that for all $x\geq x_0$, 
 	\beqnn
 	\sup_{0<y\leq \eta  x} \frac{b(y)}{b(x)} \leq C_\eta 
 	\quad \mbox{and}\quad 
 	\inf_{x/\eta\leq y\leq x} \frac{b(y)}{b(x)} \geq \frac{1}{C_\eta}. 
 	\eeqnn 
 \end{proposition}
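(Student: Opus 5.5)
We need to prove Proposition~\ref{Prop.204}: for $\eta\ge1$ there are $C_\eta,x_0\ge1$ such that for $x\ge x_0$, $\sup_{0<y\le\eta x} b(y)/b(x)\le C_\eta$ and $\inf_{x/\eta\le y\le x} b(y)/b(x)\ge 1/C_\eta$. The tools are the convention that $b$ is non-decreasing on $\mathbb{R}_+$ with $b(0)=1$, and $b\in\mathcal{OR}$.

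The plan is to reduce both bounds to the single ratio $b(\eta x)/b(x)$, using monotonicity.

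First, consider the supremum. For $0<y\le\eta x$, monotonicity of $b$ gives $b(y)\le b(\eta x)$. Hence
\[
\sup_{0<y\le \eta x}\frac{b(y)}{b(x)}\le \frac{b(\eta x)}{b(x)}.
\]
By the $\mathcal{OR}$ property with $\lambda=\eta$, we have $\limsup_{x\to\infty} b(\eta x)/b(x)<\infty$. So there exist $x_1\ge1$ and a finite constant $C'_\eta$ with $b(\eta x)/b(x)\le C'_\eta$ for all $x\ge x_1$.

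Second, consider the infimum. For $x/\eta\le y\le x$, monotonicity gives $b(y)\ge b(x/\eta)$. Set $z=x/\eta$; then
\[
\frac{b(y)}{b(x)}\ge \frac{b(z)}{b(\eta z)}.
\]
For $z\ge x_1$, that is $x\ge \eta x_1$, the first step gives $b(\eta z)/b(z)\le C'_\eta$. Therefore $b(y)/b(x)\ge 1/C'_\eta$.

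Finally, take $x_0=\eta x_1\ge1$ and $C_\eta=\max\{C'_\eta,1\}$. Both inequalities then hold for all $x\ge x_0$. Positivity of $b$ makes all the ratios well defined.

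This argument uses only the upper half of the $\mathcal{OR}$ condition. The one point needing care is the WLOG convention: it must keep $b$ positive and non-decreasing on all of $\mathbb{R}_+$ without changing its asymptotics. That holds because $b$ is eventually non-decreasing and positive, so modifying it on a compact set does not affect the $\mathcal{OR}$ limits. With that settled, the rest is immediate.
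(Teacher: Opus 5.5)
Your proof is correct and is the argument the paper has in mind. The paper omits the proof, saying only that it follows directly from the monotonicity and the $\mathcal{OR}$ property of $b$, and your reduction of both bounds via monotonicity to the eventually bounded ratio $b(\eta x)/b(x)$, using the $\mathcal{OR}$ upper limit with $\lambda=\eta$, is exactly that reasoning.
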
 
 \begin{proposition}\label{Prop.207}
 	For any $K\geq 0$ and $\delta_1, \delta_2>0$, we have as $x\to\infty $,
 	\beqlb\label{eqn.2011}
 	\sup_{|y|\leq Kb(x)} \bigg| \frac{ \nu\big(x-y+\Delta_{\delta_1}\big) }{ \nu\big(x+\Delta_{\delta_2}\big)} -\frac{\delta_1}{\delta_2} \bigg| \to 0.
 	\eeqlb
 \end{proposition}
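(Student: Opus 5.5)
Write $g_\delta(x):=\nu(x+\Delta_\delta)$. The plan is to reduce everything to the case $\delta_1=\delta_2=\delta$, which is exactly the insensitivity condition \eqref{eqn.Con02}, together with the fact that $g_\delta(x+z)\sim g_\delta(x)$ for bounded $z$. The second fact comes from the $\mathcal{S}d\cup\mathcal{OR}$ and $\mathcal{AD}$ assumptions, or more simply from \eqref{eqn.Con02} itself, since $b\ge 1$ lets us take $y=-z$ with $|z|\le Kb(x)$. The basic tool is additivity of $\nu$ over adjacent intervals:
\begin{equation*}
\nu\big(x+\Delta_{n h}\big)=\sum_{k=0}^{n-1}\nu\big(x+kh+\Delta_h\big).
\end{equation*}

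\textbf{Step 1 (commensurable lengths).} Fix $h>0$ and integers $m,n\ge1$. Each summand satisfies $\nu(x+kh+\Delta_h)\sim \nu(x+\Delta_h)$ uniformly in $k\le \max(m,n)$, because the shift $kh$ is bounded and hence at most $b(x)$ for large $x$. Therefore $\nu(x+\Delta_{nh})\sim n\,\nu(x+\Delta_h)$, and
\begin{equation*}
\frac{\nu(x-y+\Delta_{mh})}{\nu(x+\Delta_{nh})}=\frac{\nu(x-y+\Delta_{mh})}{\nu(x+\Delta_{mh})}\cdot\frac{\nu(x+\Delta_{mh})}{\nu(x+\Delta_{nh})}.
\end{equation*}
By \eqref{eqn.Con02} with $\delta=mh$, the first factor tends to $1$ uniformly in $|y|\le Kb(x)$. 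The second factor tends to $m/n$. This proves \eqref{eqn.2011} whenever $\delta_1/\delta_2$ is rational.

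\textbf{Step 2 (general lengths by sandwiching).} Given $\delta_1,\delta_2$ and $\varepsilon>0$, choose $h$ small and integers with $mh\le\delta_1\le (m+1)h$ and $nh\le \delta_2\le (n+1)h$. Monotonicity of $\nu$ in the interval gives
\begin{equation*}
\frac{\nu(x-y+\Delta_{mh})}{\nu(x+\Delta_{(n+1)h})}\le \frac{\nu(x-y+\Delta_{\delta_1})}{\nu(x+\Delta_{\delta_2})}\le\frac{\nu(x-y+\Delta_{(m+1)h})}{\nu(x+\Delta_{nh})}.
\end{equation*}
By Step 1 the two bounds converge uniformly in $y$ to $m/(n+1)$ and $(m+1)/n$. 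Both of these lie within $\varepsilon$ of $\delta_1/\delta_2$ once $h$ is small, and letting $\varepsilon\downarrow0$ finishes the proof.

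\textbf{Main obstacle.} The only delicate point is positivity: $\nu(x+\Delta_h)>0$ must hold for large $x$ so that the ratios make sense. This is implicit in \eqref{eqn.Con02} being well defined, and it also follows from the $\mathcal{S}d\cup\mathcal{OR}$ membership, which forces eventual positivity. Uniformity in $y$ needs no extra work, because it is inherited directly from \eqref{eqn.Con02} applied at the finitely many lengths $mh$ and $(m+1)h$.
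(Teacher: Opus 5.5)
Your proof is correct and is essentially the paper's argument. Additivity of $\nu$ over a fine grid plus the insensitivity condition \eqref{eqn.Con02} settles commensurable lengths, and monotonicity in $\delta$ sandwiches the general case; the paper uses the dyadic mesh $2^{-n}$ and applies \eqref{eqn.Con02} to each small piece with the combined shift $y-j2^{-n}$, whereas you use a general mesh $h$ and apply it once at length $mh$, which works equally well. One minor caveat: your aside that bounded-shift invariance also follows from the $\mathcal{S}d\cup\mathcal{OR}$ and $\mathcal{AD}$ assumptions is false in the $\mathcal{OR}$ case, but you actually derive it from \eqref{eqn.Con02}, so nothing depends on it.
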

 \proof  For any  integers $n\geq 1$ and $k\geq 1$,  our assumption \eqref{eqn.Con02} induces that as $x\to\infty $, 
 \beqnn
 \sup_{|y|\leq Kb(x)} \bigg|\frac{\nu\big(x-y+\Delta_{\frac{k}{2^n}}\big)}{\nu\big(x+\Delta_{\frac{1}{2^n}}\big)} -k \bigg|
 \ar\leq\ar \sum_{j=0}^{k-1} 	\sup_{|y|\leq Kb(x)} \bigg|\frac{\nu\big(x -y+ \frac{j}{2^n}+\Delta_{\frac{1}{2^n}}\big)}{\nu\big(x+\Delta_{\frac{1}{2^n}}\big)} -1\bigg| \to 0.
 \eeqnn
 This immediately yields that for all integers $k,l \geq 1$,
 \beqlb\label{eqn.203}
 \lim_{x\to\infty} 	\sup_{|y|\leq Kb(x)} \bigg|\frac{\nu\big(x-y+\Delta_{\frac{k}{2^n}}\big)}{\nu\big(x+\Delta_{\frac{\ell}{2^n}}\big)} -\frac{k}{l} \bigg| =0.
 \eeqlb
 For any $\delta_1,\delta_2>0$ and any large integer $n\geq 1$, one can always find two integers $k_{n,1}, k_{n,2}\geq 1$ such  that $\frac{k_{n,1}}{2^n}<\delta_1\leq \frac{k_{n,1}+1}{2^n}$ and $\frac{k_{n,2}}{2^n}<\delta_2\leq \frac{k_{n,2}+1}{2^n}$. 
 The non-decreasing property of $\nu\big(x+\Delta_{\delta}\big)$ in $\delta$ induces that 
 \beqnn
 \frac{\nu\big(x-y+\Delta_{\frac{k_{n,1}}{2^n}}\big) }{\nu\big(x+\Delta_{\frac{k_{n,2}+1}{2^n}}\big) }
 \leq   \frac{ \nu\big(x-y+\Delta_{\delta_1}\big) }{ \nu\big(x+\Delta_{\delta_2}\big)} 
 \leq   \frac{\nu\big(x-y+\Delta_{\frac{k_{n,1}+1}{2^n}}\big) }{\nu\big(x+\Delta_{\frac{k_{n,2}}{2^n}}\big)}
 \eeqnn
 The limit \eqref{eqn.203} tells that 
 \beqnn
 \liminf_{x\to\infty} \inf_{|y|\leq Kb(x)}\frac{\nu\big(x-y+\Delta_{\frac{k_{n,1}}{2^n}}\big) }{\nu\big(x+\Delta_{\frac{k_{n,2}+1}{2^n}}\big) } \geq \frac{ k_{n,1}}{ k_{n,2}+1}
 \quad\mbox{and}\quad
 \limsup_{x\to\infty} \sup_{|y|\leq Kb(x)} \frac{\nu\big(x-y+\Delta_{\frac{k_{n,1}+1}{2^n}}\big) }{\nu\big(x+\Delta_{\frac{k_{n,2}}{2^n}}\big)} \leq \frac{k_{n,1}+1}{k_{n,2}},
 \eeqnn
 which follows that for all large $n\geq 1$,
 \beqnn
 \frac{ k_{n,1}}{ k_{n,2}+1}\leq  \liminf_{x\to\infty} \inf_{|y|\leq Kb(x)}\frac{ \nu\big(x-y+\Delta_{\delta_1}\big) }{ \nu\big(x+\Delta_{\delta_2}\big)} 
 \leq  \limsup_{x\to\infty} \sup_{|y|\leq Kb(x)}\frac{ \nu\big(x-y+\Delta_{\delta_1}\big) }{ \nu\big(x+\Delta_{\delta_2}\big)}  
 \leq \frac{k_{n,1}+1}{k_{n,2}},
 \eeqnn
 and then \eqref{eqn.2011} holds, since both the lower and upper bounds converge to $\delta_1/\delta_2$ as $n\to\infty$. 
 \qed 
 
 \begin{proposition}\label{Prop.205}
 	For any $K\geq 0$ and $\delta_1\geq \delta_0>0$, we have  as $x\to\infty$,
 	\beqlb\label{eqn.202}
 	\sup_{|y|\leq Kb(x)}  \sup_{\delta\in [\delta_0,\delta_1]}   \bigg| \frac{ \nu\big(x-y+\Delta_\delta\big) }{ \nu\big(x+\Delta_\delta\big)}
 	-1 \bigg|  \to 0. 
 	\eeqlb
 	Moreover,  there exists a constant $x_0>0$ such that  
 	\beqlb\label{eqn.2021}
 	\sup_{x\geq x_0}\sup_{y\geq  x}  \sup_{\delta\in [\delta_0,\delta_1]} 
 	\frac{\nu\big(y+\Delta_\delta\big)}{\nu\big(x+\Delta_\delta\big)}<\infty. 
 	\eeqlb
 	
 \end{proposition}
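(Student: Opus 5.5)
For \eqref{eqn.202}, the plan is to discretize $\delta$ and use the monotonicity of $\delta\mapsto\nu(z+\Delta_\delta)$, as in the proof of Proposition~\ref{Prop.207}. Fix $\varepsilon>0$ and choose a finite grid $\delta_0=d_0<d_1<\dots<d_m=\delta_1$ with $d_{j+1}/d_j\le 1+\varepsilon$. For $\delta\in[d_j,d_{j+1}]$ and $|y|\le Kb(x)$, monotonicity gives
\beqnn
\frac{\nu(x-y+\Delta_{d_j})}{\nu(x+\Delta_{d_{j+1}})}\le \frac{\nu(x-y+\Delta_\delta)}{\nu(x+\Delta_\delta)}\le \frac{\nu(x-y+\Delta_{d_{j+1}})}{\nu(x+\Delta_{d_j})}.
\eeqnn
Apply Proposition~\ref{Prop.207} to each of the finitely many pairs $(d_j,d_{j+1})$ and $(d_{j+1},d_j)$. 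Uniformly in $|y|\le Kb(x)$, the left side tends to $d_j/d_{j+1}\ge (1+\varepsilon)^{-1}$ and the right side tends to $d_{j+1}/d_j\le 1+\varepsilon$. Since the grid is finite, taking $\limsup_{x\to\infty}$ of the supremum over $\delta$ and $y$ gives a deviation of at most $\varepsilon$ (up to $O(\varepsilon)$). Letting $\varepsilon\to0$ proves \eqref{eqn.202}.

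For \eqref{eqn.2021}, the plan is to use the almost-decreasing property in Condition~\ref{MainCondition01}(3), but only for finitely many interval lengths. The class $\mathcal{AD}$ is defined separately for each $\delta$, with constants depending on $\delta$, so it does not directly give uniformity in $\delta$.

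Fix $\delta=\delta_0$. By $\mathcal{AD}$ there exist $c>0$ and $x_1$ such that $\nu(x+\Delta_{\delta_0})\ge c\,\nu(y+\Delta_{\delta_0})$ for all $y\ge x\ge x_1$. For $\delta\in[\delta_0,\delta_1]$, cover $y+\Delta_\delta$ by $N=\lceil \delta_1/\delta_0\rceil$ consecutive intervals $y+(k-1)\delta_0+\Delta_{\delta_0}$, $k=1,\dots,N$. Each starts at a point $\ge y\ge x$, so
\beqnn
\nu(y+\Delta_\delta)\le \sum_{k=1}^N \nu\big(y+(k-1)\delta_0+\Delta_{\delta_0}\big)\le \frac{N}{c}\,\nu(x+\Delta_{\delta_0})\le \frac{N}{c}\,\nu(x+\Delta_\delta),
\eeqnn
where the last step uses monotonicity in $\delta$. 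This bound is uniform in $y\ge x\ge x_1$ and $\delta\in[\delta_0,\delta_1]$, so \eqref{eqn.2021} holds with $x_0=x_1$.

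One must also check that the denominators are positive for large $x$. This follows from $\mathcal{AD}$ together with the fact that $x^\alpha\nu(x+\Delta_\delta)\in\mathcal{S}d\cup\mathcal{OR}$, since both classes force eventual positivity.

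The main obstacle is the uniformity in $\delta$ in both parts. In the first part, Proposition~\ref{Prop.207} is only pointwise in $(\delta_1,\delta_2)$, and monotonicity together with a finite grid is what transfers it to a compact range of $\delta$. In the second part, the covering argument reduces everything to the single length $\delta_0$, which avoids needing uniformity of the $\mathcal{AD}$ constants across $\delta$.
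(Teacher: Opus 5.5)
Your proposal is correct. For \eqref{eqn.202} it is essentially the paper's argument: a finite grid of $[\delta_0,\delta_1]$, the monotone sandwich in $\delta$, and Proposition~\ref{Prop.207} applied to the finitely many adjacent pairs. Controlling $d_{j+1}/d_j$ instead of the mesh size makes no real difference, since $\delta\ge\delta_0$.

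For \eqref{eqn.2021} your route is genuinely different. The paper only asserts that the bound follows from \eqref{eqn.202}, monotonicity in $\delta$ and $\mathcal{AD}$. The natural way to fill this in is to bound $\nu(y+\Delta_\delta)/\nu(x+\Delta_\delta)$ by
\[
\frac{\nu(y+\Delta_r)}{\nu(x+\Delta_r)}\cdot\frac{\nu(x+\Delta_r)}{\nu(x+\Delta_{\delta_0})},
\]
where $r$ is a grid length or $\delta_1$. The first factor is controlled by the $\mathcal{AD}$ constant for that length $r$. The second factor is controlled by the asymptotic ratio $\nu(x+\Delta_r)/\nu(x+\Delta_{\delta_0})\to r/\delta_0$, which ultimately rests on the insensitivity condition \eqref{eqn.Con02}.

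Your covering argument replaces that asymptotic comparison with subadditivity and $\mathcal{AD}$ at the single length $\delta_0$. This buys three things:
\begin{itemize}
\item it uses no insensitivity at all;
\item it gives the explicit constant $\lceil\delta_1/\delta_0\rceil/c$;
\item $x_0$ is simply the $\mathcal{AD}$ threshold, rather than a point past which ratio asymptotics have taken effect.
\end{itemize}
It also addresses the fact, glossed over in the paper's one-line justification, that the $\mathcal{AD}$ constants depend on $\delta$ and so give no uniformity in $\delta$ by themselves.
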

 \proof We first prove the limit \eqref{eqn.202}, which reduces to our assumption \eqref{eqn.Con02} when $\delta_1=\delta_0$.  
 We now prove it with $\delta_1>\delta_0$. 
 For any $\epsilon \in(0,1)$, we take a finite partition $\delta_0= r_0<r_1<\cdots r_{k_\epsilon}= \delta_1$ of the interval $[\delta_0,\delta_1]$ with mesh size at most $\epsilon$. 
 The increase of $\nu\big(x-y+\Delta_\delta\big)$ in $\delta$ yields that  for any $j=0,\cdots, k_\epsilon-1$,
 \beqnn
 \nu\big(x-y+\Delta_{r_j}\big)
 \leq 
 \inf_{ \delta\in [r_j,r_{j+1}]} \nu\big(x-y+\Delta_\delta\big)  
 \leq \sup_{ \delta\in [r_j,r_{j+1}]} \nu\big(x-y+\Delta_\delta\big)
 \leq
 \nu\big(x-y+\Delta_{r_{j+1}}\big),
 \eeqnn
 which follows that 
 \beqnn
 \frac{\nu\big(x-y+\Delta_{r_{j}}\big)}{\nu\big(x+\Delta_{r_{j+1}}\big)}
 \leq 
 \inf_{ \delta\in [r_j,r_{j+1}]} \frac{\nu\big(x-y+\Delta_\delta\big)}{\nu\big(x+\Delta_\delta\big)}
 \leq 
 \sup_{ \delta\in [r_j,r_{j+1}]} \frac{\nu\big(x-y+\Delta_\delta\big)}{\nu\big(x+\Delta_\delta\big)}
 \leq 
 \frac{\nu\big(x-y+\Delta_{r_{j+1}}\big)}{\nu\big(x+\Delta_{r_{j}}\big)}
 \eeqnn
 and 
 \beqnn
 \inf_{j=0,\cdots, k_\epsilon-1}\frac{\nu\big(x-y+\Delta_{r_{j}}\big)}{\nu\big(x+\Delta_{r_{j+1}}\big)}
 \ar\leq\ar   \inf_{\delta \in[\delta_0,\delta_1]} \frac{\nu\big(x-y+\Delta_\delta\big)}{\nu\big(x+\Delta_\delta\big)} \cr
 \ar\leq\ar \sup_{\delta \in[\delta_0,\delta_1]} \frac{\nu\big(x-y+\Delta_\delta\big)}{\nu\big(x+\Delta_\delta\big)}
 \leq \sup_{j=0,\cdots, k_\epsilon-1} \frac{\nu\big(x-y+\Delta_{r_{j+1}}\big)}{\nu\big(x+\Delta_{r_{j}}\big)}. 
 \eeqnn
 By Proposition~\ref{Prop.207}, we have
 \beqnn
 \lim_{x\to\infty}  \inf_{j=0,\cdots, k_\epsilon-1} \inf_{|y|\leq Kb(x)} \frac{\nu\big(x-y+\Delta_{r_{j}}\big)}{\nu\big(x+\Delta_{r_{j+1}}\big)} 
 =\inf_{j=0,\cdots, k_\epsilon-1} \frac{r_j}{r_{j+1}}\geq \frac{\delta_0}{\delta_0+\epsilon}
 \eeqnn
 and 
 \beqnn
 \lim_{x\to\infty}  \sup_{j=0,\cdots, k_\epsilon-1} \sup_{|y|\leq Kb(x)}\frac{\nu\big(x-y+\Delta_{r_{j+1}}\big)}{\nu\big(x+\Delta_{r_{j}}\big)}
 = \sup_{j=0,\cdots, k_\epsilon-1} \frac{r_{j+1}}{r_j}\leq \frac{\delta_0+\epsilon}{\delta_0}, 
 \eeqnn
 which follows that 
 \beqnn
 \frac{\delta_0}{\delta_0+\epsilon}\leq \liminf_{x\to\infty}  \inf_{|y|\leq Kb(x)}\inf_{\delta \in[\delta_0,\delta_1]} \frac{\nu\big(x-y+\Delta_\delta\big)}{\nu\big(x+\Delta_\delta\big)}  
 \ar\leq\ar  \limsup_{x\to\infty}\sup_{|y|\leq Kb(x)} \sup_{\delta \in[\delta_0,\delta_1]} \frac{\nu\big(x-y+\Delta_\delta\big)}{\nu\big(x+\Delta_\delta\big)}\leq \frac{\delta_0+\epsilon}{\delta_0}
 \eeqnn
 and then the desired limit \eqref{eqn.202} follows as $\epsilon\to 0+$. 
 The uniform upper bound \eqref{eqn.2021} follows directly from  \eqref{eqn.202} as well as the non-decreasing property in $\delta$ and the almost decrease in $x$ of $\nu\big(x+\Delta_\delta\big)$. 
 \qed

 \begin{proposition}\label{Prop.202}
 	For any $ \delta_0>0$, there exist two constants $C,x_0>0$ such that for all $x\geq x_0$,
 	\beqnn
 	\inf_{\delta\geq \delta_0} \nu\big(x+\Delta_\delta\big) \geq \exp\big\{ -C\cdot  x^{1-1/\alpha} \big\}.
 	\eeqnn
 \end{proposition}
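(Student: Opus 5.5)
It suffices to handle $\delta=\delta_0$, since $\nu(x+\Delta_\delta)$ is non-decreasing in $\delta$. We then write $g(x):=\nu(x+\Delta_{\delta_0})$ and aim to show $\log g(x)\geq -Cx^{1-1/\alpha}$ for large $x$. The idea is to use the insensitivity condition \eqref{eqn.Con02} iteratively along a chain of points. By the first relation in \eqref{eqn.Con01}, $b(x)\geq c_0 x^{1/\alpha}$ for all large $x$. Taking $K=1$ in \eqref{eqn.Con02} (or Proposition~\ref{Prop.205}), there is $x_1$ such that for $x\geq x_1$ and $|y|\leq b(x)$,
\[
g(x-y)\geq \tfrac12\, g(x).
\]
Equivalently, $g(x+y)\geq \tfrac12 g(x)$ whenever $0\leq y\leq b(x+y)$. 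Since $b$ is non-decreasing, this holds in particular for $0\leq y\leq b(x)$.

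We build the chain $x_{k+1}=x_k+b(x_k)$ starting from $x_1$ (chosen so that $g(x_1)>0$). Such an $x_1$ exists: if $g$ vanished on some half-line, then the subexponential/$\mathcal{OR}$ assumption on $x^\alpha g(x)$ would fail, or \eqref{eqn.Con02} would be vacuous. We note positivity as part of the hypotheses implicitly used. Iterating gives $g(x_k)\geq 2^{-(k-1)} g(x_1)$. For any $x$ in $[x_k,x_{k+1}]$ we have $x-x_k\leq b(x_k)\leq b(x)$, so $g(x)\geq \tfrac12 g(x_k)$. Hence
\[
g(x)\geq 2^{-k}g(x_1)\quad\text{for } x\in[x_k,x_{k+1}].
\]

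It remains to count the steps. Since $x_{k+1}-x_k=b(x_k)\geq c_0x_k^{1/\alpha}$, compare with the ODE $\dot u=c_0u^{1/\alpha}$. The quantity $u^{1-1/\alpha}$ then grows at least linearly in $k$. Concretely,
\[
x_{k+1}^{1-1/\alpha}-x_k^{1-1/\alpha}\geq (1-1/\alpha)\,x_{k+1}^{-1/\alpha}(x_{k+1}-x_k)\geq (1-1/\alpha)\,c_0\,\big(x_k/x_{k+1}\big)^{1/\alpha}.
\]
Moreover $x_{k+1}/x_k=1+b(x_k)/x_k\to1$, because $b(t)=o(t)$. So each increment is at least some $c_1>0$ for large $k$, which gives $k\leq C'x_k^{1-1/\alpha}$. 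Therefore
\[
g(x)\geq g(x_1)\exp\{-(\log 2)\,C'x^{1-1/\alpha}\}\geq \exp\{-Cx^{1-1/\alpha}\}
\]
for large $x$, absorbing the constant $g(x_1)$ by slightly enlarging $C$.

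The main subtleties are two. First, we need the chain $x_k\to\infty$ to cover all large $x$ with $k$ controlled; this follows since $x_{k+1}-x_k\geq c_0x_1^{1/\alpha}>0$. Second, the one-step comparison must only use perturbations of size at most $b(\cdot)$ at the point where \eqref{eqn.Con02} is applied. Because $b$ is monotone, using the base point $x_k$ for both the step and the interpolation is consistent.
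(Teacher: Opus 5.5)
Your argument is correct, but it takes a different route from the paper. Both proofs first reduce to $\delta=\delta_0$ by monotonicity.

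\textbf{The paper's route.} It uses $x^{1/\alpha}\le C_1 b(x)$ together with \eqref{eqn.Con02} to conclude that $g(x)=\nu(x+\Delta_{\delta_0})$ is insensitive to shifts of size $x^{1/\alpha}$. It then cites Lemma~2.3 in~\cite{DenisovVatutinWachtel2014}, which gives $|\log g(x)|\le Cx^{1-1/\alpha}$ for such functions.

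\textbf{Your route.} You prove the bound from scratch. You chain along $x_{k+1}=x_k+b(x_k)$, losing a factor $2$ per step and interpolating inside $[x_k,x_{k+1}]$ from the base point $x_k$. You then count steps with the mean value theorem for $u^{1-1/\alpha}$, using $b(x)\ge c_0x^{1/\alpha}$ and $x_{k+1}/x_k\to 1$. This is essentially the proof of the cited lemma, but run directly on the $b$-scale.

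\textbf{Comparison.} The paper's version is a one-liner at the cost of an external reference. Yours is self-contained. Because it only counts $b$-steps, it also gives, using the $\mathcal{OR}$ property of $b$ to compare $b(x_{k+1})$ with $b(x_k)$, the bound $\log(1/g(x))=O\big(\int_{x_1}^x du/b(u)\big)$. This is never worse than $O(x^{1-1/\alpha})$.

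\textbf{Two small points.}
\begin{itemize}
\item The sentence ``Equivalently, $g(x+y)\ge\frac12 g(x)$ whenever $0\le y\le b(x+y)$'' does not follow from the one-sided inequality you displayed. Applied at base point $x+y$, that inequality bounds $g(x+y)$ from above, not below; the stated claim would need the other half of \eqref{eqn.Con02}. What you actually use, $g(x+y)\ge\frac12 g(x)$ for $0\le y\le b(x)$, follows directly by taking $y\in[-b(x),0]$ in your display at base point $x$.
\item Positivity of $g$ on a half-line needs no separate argument. It is implicit in \eqref{eqn.Con02}, since the ratio there must be defined for all large $x$.
\end{itemize}
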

 \proof Since $\nu\big(x+\Delta_\delta\big)$ is non-decreasing in $\delta$, it suffices to prove 
 \beqlb\label{eqn.201}
 \nu\big(x+\Delta_{\delta_0}\big) \geq \exp\big\{ -C\cdot x^{1-1/\alpha} \big\},
 \eeqlb 
 for large $x$ and some $C>0$. 
 In view of the first limit in \eqref{eqn.Con01},  we have $x^{1/\alpha} \leq C_1b(x)$ uniformly in large $x$ and then  by \eqref{eqn.Con02},
 \beqnn 
 \sup_{0\leq y\leq x^{1/\alpha}}
 \bigg| \frac{ \nu\big(x-y+\Delta_{\delta_0}\big) }{ \nu\big(x+\Delta_{\delta_0}\big)}-1\bigg|	\leq \sup_{|y|\leq C_1b(x)}
 \bigg| \frac{ \nu\big(x-y+\Delta_{\delta_0}\big) }{ \nu\big(x+\Delta_{\delta_0}\big) }-1\bigg|
 \to0.
 \eeqnn
 By Lemma~2.3 in~\cite{DenisovVatutinWachtel2014}, there exists a constant $C>0$ such that $\big|\log\nu\big(x+\Delta_{\delta_0}\big)
 \big|\leq C\cdot x^{1-1/\alpha}$ uniformly in large $x$ and then the lower bound \eqref{eqn.201} follows.
 \qed

 \begin{proposition}\label{Prop.203}
 	For any $\eta>0$ and any $\delta_1\geq \delta_0>0 $, there exists a constant $x_0>0$ such that for all	$x\geq x_0$ and $y\in [0,x/2]$, 
 	\beqnn
 	\sup_{\delta\in [\delta_0,\delta_1]}  \frac{\nu\big(x-y+\Delta_\delta\big)}{\nu\big(x+\Delta_\delta\big)} \leq   \exp \Big\{ \eta \cdot \Big(\frac{y}{b(x)}+1\Big) \Big\}. 
 	\eeqnn 
 \end{proposition}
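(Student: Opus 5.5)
We prove the bound by a chaining argument: shift the window $x+\Delta_\delta$ to the left in steps of size comparable to $b(x)$, and use the insensitivity estimate \eqref{eqn.202} of Proposition~\ref{Prop.205} at each step, uniformly in $\delta\in[\delta_0,\delta_1]$.

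Fix $\eta>0$ and choose $\epsilon>0$ small, to be determined in terms of $\eta$. Write $C_2:=C_\eta$ from Proposition~\ref{Prop.204} with ratio $2$, so that $b(z)\ge b(x)/C_2$ for $z\in[x/2,x]$ and large $x$.

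\textbf{Step 1 (one-step bound).} Apply \eqref{eqn.202} with $K=1$: there is $z_0$ such that for all $z\ge z_0$, all $0\le h\le b(z)$ and all $\delta\in[\delta_0,\delta_1]$,
\[
\nu(z-h+\Delta_\delta)\le (1+\epsilon)\,\nu(z+\Delta_\delta)\le e^{\epsilon}\,\nu(z+\Delta_\delta).
\]

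\textbf{Step 2 (chaining).} Take $x\ge x_0:=2z_0$ (enlarged if needed so that Proposition~\ref{Prop.204} applies) and $y\in[0,x/2]$. Set $h:=b(x)/C_2$, let $n:=\lceil y/h\rceil$, and define points $z_k:=x-ky/n$ for $k=0,\dots,n$. Each step has length $y/n\le h$. Every $z_k$ lies in $[x/2,x]$, so $b(z_k)\ge b(x)/C_2=h$, and therefore Step 1 applies at $z=z_k$. Telescoping gives
\[
\frac{\nu(x-y+\Delta_\delta)}{\nu(x+\Delta_\delta)}=\prod_{k=0}^{n-1}\frac{\nu(z_{k+1}+\Delta_\delta)}{\nu(z_k+\Delta_\delta)}\le e^{\epsilon n}.
\]
Since $n\le C_2y/b(x)+1$, the right-hand side is at most $\exp\{\epsilon C_2\,y/b(x)+\epsilon\}$.

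\textbf{Step 3 (choice of $\epsilon$).} Choose $\epsilon:=\eta/\max(C_2,1)$. Then $\epsilon C_2\le\eta$ and $\epsilon\le\eta$, so the bound from Step 2 is at most $\exp\{\eta(y/b(x)+1)\}$, uniformly in $\delta\in[\delta_0,\delta_1]$. This is the claim.

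The main obstacle is keeping the single-step constant uniform along the whole chain. The points $z_k$ move down to $x/2$, and both $z_0$ and the step size have to be controlled there. The first is handled because $z_k\ge x/2\ge z_0$. The second is handled by the monotonicity of $b$ together with Proposition~\ref{Prop.204}, which gives $b(z_k)\ge b(x)/C_2$. Uniformity in $\delta$ is inherited directly from \eqref{eqn.202}.
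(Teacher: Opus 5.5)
Your proof is correct and is the same chaining argument the paper uses: telescope the ratio along successive left shifts and bound each factor by the insensitivity estimate \eqref{eqn.202}, uniformly in $\delta$. Your step size $b(x)/C_2$, justified by Proposition~\ref{Prop.204}, is actually more careful than the paper's version, which steps by $b(x)$ from the points $x-kb(x)$ even though the one-step bound there only covers shifts up to $b(x-kb(x))\le b(x)$; that gap closes exactly as you do it, or equivalently by invoking \eqref{eqn.202} with $K=C_2$.
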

 \proof By Proposition~\ref{Prop.205}, we can choose $z_0>0$ large enough such that $b(x)<x/2$ for all $x\geq z_0$ and  
 \beqnn
 \sup_{x\geq z_0}  \sup_{y\in[0,b(x)]}\sup_{\delta\in [\delta_0,\delta_1]}  \frac{\nu\big(x-y+\Delta_\delta\big)}{\nu\big(x+\Delta_\delta\big)}  \leq e^{\eta}. 
 \eeqnn
 For any $x>2z_0$ and $y\in[b(x),x/2]$, we have 
 \beqnn
 \frac{\nu\big(x-y+\Delta_\delta\big)}{\nu\big(x+\Delta_\delta\big)} 
 \ar=\ar \frac{\nu\big(x-[\frac{y}{b(x)}]\cdot b(x)-(y-[\frac{y}{b(x)}]\cdot b(x))+\Delta_\delta\big)}{\nu\big(x-[\frac{y}{b(x)}]\cdot b(x)+\Delta_\delta\big)}   \times \prod_{k=0}^{[\frac{y}{b(x)}]-1} \frac{\nu\big(x-  kb(x)-b(x)+\Delta_\delta\big)}{\nu\big(x-kb(x)+\Delta_\delta\big)}  ,
 \eeqnn
 which can be bounded by $\exp\{\eta \big( [\frac{y}{b(x)}] +1 \big)\} \leq \exp\{\eta\big( \frac{y}{b(x)} +1 \big)\}$ uniformly in $\delta\in [\delta_0,\delta_1]$. Then the desired upper bound holds with $x_0=2 z_0$.
 \qed

 \section{Proofs}
 \label{Sec.Proofs}
 \setcounter{equation}{0} 
 
 This section is devoted to providing the detailed proof of our main theorem. 
 In order to make it much easier to understand, we first formulate two important auxiliary lemmas and give their proofs later in  the next two subsections.

 By the L\'evy-It\^o  decomposition theorem,  the process $X$ admits the following representation: 
 \beqnn
 X_t:= \sigma \cdot B_t + \int_0^t \int_{\mathbb{R}} y \widetilde{N}(ds,dy),\quad t\geq 0,
 \eeqnn
 where $B$ is a standard Brownian motion and $\widetilde{N}(ds,dy)$ is a compensated Poisson random measure on $(0,\infty)\times \mathbb{R}$ with intensity $ds\, \nu(dy)$. 
 It can also be decomposed into the following two independent martingales
 \beqnn
 M_t := \sigma \cdot B_t + \int_0^t \int_{-\infty}^1 y \widetilde{N}(ds,dy)
 \quad \mbox{and}\quad 
 Z_t:=\int_0^t \int_1^{\infty}y \widetilde{N}(ds,dy),\quad t\geq 0,
 \eeqnn 
 which contain only jumps no larger than $1$ and only jumps larger than $1$, respectively.

 \begin{lemma} \label{Lemma.302}
 	The following assertions hold.
 	\begin{enumerate}
 		\item[(1)] The following limit holds
 		\beqnn
 		\lim_{K\to\infty}\limsup_{t\to\infty} \mathbf{P}\Big( \frac{|M_t|}{b(t)}\geq K  \Big)  =0. 
 		\eeqnn
 		
 		\item[(2)] There exists a constant $t_0>0$ such that  
 		\beqnn
 		\sup_{t\geq t_0}\mathbf{E}\bigg[ \exp\Big\{ \frac{M_t}{b(t)}\Big\} \bigg] <\infty.
 		\eeqnn
 		
 		\item[(3)] For any $\eta>0$, there exist two constants $c_\eta,C_\eta  >0$ such that for all $t\geq 0$ and $y\geq \eta t$, 
 		\beqnn
 		\mathbf{P}\big(M_t>y\big) \leq C_\eta\cdot \exp\big\{-c_\eta\cdot y\big\}. 
 		\eeqnn
 		
 		\item[(4)] 	For any $T>0$ and $\varepsilon\in(0,1)$, we have  as $y\to\infty$,
 		\beqnn
 		\exp\{\varepsilon y\log y\}\cdot \sup_{0<s\leq T} 
 		\frac{ \mathbf P\big(M_s>y \big) }{ s } \to 0.
 		\eeqnn
 	\end{enumerate}
 \end{lemma}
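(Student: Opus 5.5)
The four assertions all come from the L\'evy--Khintchine representation of $M$. Its Laplace exponent is
\[
\psi(\lambda):=\log\mathbf E\big[e^{\lambda M_1}\big]=\frac{\sigma^2\lambda^2}{2}+\int_{-\infty}^1\big(e^{\lambda y}-1-\lambda y\big)\,\nu(dy).
\]
Because jumps of $M$ are bounded above by $1$, $\psi(\lambda)<\infty$ for every $\lambda\ge 0$, and $\mathbf E[e^{\lambda M_t}]=e^{t\psi(\lambda)}$. Write $Z^{(>1)}$ for the compound Poisson part of the jumps exceeding $1$. Then $X_t=M_t+Z_t$, and $Z_t$ has finite mean.

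For assertion (1), split $\{|M_t|\ge Kb(t)\}$ into an upper and a lower deviation.
\begin{itemize}
\item For the upper deviation, Chebyshev with the second moment gives $\mathbf P(M_t\ge Kb(t))\le t\,\mathbf E[M_1^2]/(K^2b(t)^2)$. Here $\mathbf E[M_1^2]=\sigma^2+\int_{-\infty}^1 y^2\nu(dy)$, and this is where the argument may need care, since $\int_{-\infty}^{-1}y^2\nu(dy)$ need not be finite.
\item For the lower deviation, use independence of $M$ and $Z$. On $\{M_t\le -Kb(t)\}\cap\{Z_t\le Kb(t)/2\}$ one has $X_t\le -Kb(t)/2$, so
\[
\mathbf P\big(M_t\le -Kb(t)\big)\,\mathbf P\big(Z_t\le Kb(t)/2\big)\le \mathbf P\big(X_t\le -Kb(t)/2\big).
\]
The right-hand side is controlled by the last limit in \eqref{eqn.Con01}. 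The factor $\mathbf P(Z_t\le Kb(t)/2)$ tends to $1$, because $\mathbf P(|Z_t|>Kb(t)/2)\to 0$ by the von Bahr--Esseen bound $\mathbf E|Z_t|^\alpha\le Ct\int_1^\infty y^\alpha\nu(dy)$ together with $b(t)\ge ct^{1/\alpha}$.
\item The same independence trick applied to the upper side, $\{M_t\ge Kb(t)\}\cap\{Z_t\ge -Kb(t)/2\}$, avoids the second moment entirely. So I will use the product argument for both directions.
\end{itemize}

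For assertion (2), write $\mathbf E[e^{M_t/b(t)}]=\exp\{t\psi(1/b(t))\}$. Use $e^{u}-1-u\le u^2$ for $u\le 1$. For $y\in[-b(t),1]$ the integrand is at most $y^2/b(t)^2$, contributing at most
\[
\frac{1}{b(t)^2}\Big(\sigma^2+\int_{-1}^1y^2\nu(dy)\Big)+\frac{1}{b(t)^2}\int_{-b(t)}^{-1}y^2\nu(dy).
\]
For $y<-b(t)$ the integrand is at most $|y|/b(t)$. Multiplying by $t$, I need
\[
\frac{t}{b(t)^2}\int_{-b(t)}^{-1}y^2\nu(dy)+\frac{t}{b(t)}\int_{-\infty}^{-b(t)}|y|\,\nu(dy)
\]
to stay bounded. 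This is the main obstacle: it requires that the negative tail be compatible with the scale $b$. I would derive it from the tightness in (1). Tightness of $M_t/b(t)$ forces the truncated-variance functional $t\,b(t)^{-2}\int(y^2\wedge b(t)^2)\nu(dy)$ and $t\,\nu(-\infty,-b(t))$ to be bounded, by the classical necessary conditions for convergence of triangular arrays to infinitely divisible laws, using the symmetrised process and the L\'evy--Khintchine characterisation. The remaining term $\frac{t}{b(t)}\int_{-\infty}^{-b(t)}|y|\nu(dy)$ I would handle by centering: since $M$ has mean zero, the compensator drift on $(-\infty,-b(t))$ is balanced against the truncated mean. Alternatively, one can dominate $e^{u}-1-u$ for $u\le -1$ by $|u|$ and combine it with the bound $\int_{-\infty}^{-r}|y|\nu(dy)\le C\,r\,\nu(-\infty,-r/2)$ obtained through the O-regular variation of $b$ via Proposition~\ref{Prop.204}. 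If this fails, I would restrict to $\lambda=c/b(t)$ with a small $c$ and rescale, which is harmless for the later use.

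For assertion (3), use Chernoff with a fixed $\lambda$: $\mathbf P(M_t>y)\le \exp\{t\psi(\lambda)-\lambda y\}$. With $\psi(\lambda)\le \lambda\eta/2$, valid for small $\lambda$ since $\psi(\lambda)=O(\lambda^2)$ near $0$, and $t\le y/\eta$, this gives $\exp\{-\lambda y/2\}$.

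For assertion (4), optimise Chernoff instead. For large $\lambda$, $\psi(\lambda)\le C e^{\lambda}$ because jumps are at most $1$. Hence
\[
\mathbf P(M_s>y)\le \exp\{sCe^\lambda-\lambda y\}.
\]
Take $\lambda=\log y$. For $s\ge 1/y$ this gives at most $e^{CTy}y^{-y}$, while for small $s$ the bound $1\le sy$ yields a factor $s$. Combining the two,
\[
\frac{\mathbf P(M_s>y)}{s}\le y\,e^{CTy-y\log y},
\]
which beats $e^{\varepsilon y\log y}$ for any $\varepsilon<1$.
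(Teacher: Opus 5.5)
\textbf{Gap in (2): the drift term.} The genuine gap is in (2), at the term $D_t:=\frac{t}{b(t)}\int_{-\infty}^{-b(t)}|y|\,\nu(dy)$.

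Your reduction is right. The bounds on $t\,b(t)^{-2}\int_{-b(t)}^{1}y^2\,\nu(dy)$ and $t\,\nu((-\infty,-b(t)])$ do follow from tightness of $M_t/b(t)$. The paper proves exactly this in Proposition~\ref{Prop.306}, via a lower bound on $|\mathbf E e^{\mathtt i uM_t/b(t)}|$ for small $u$. But none of your three suggestions controls $D_t$:
\begin{itemize}
\item ``Centering'' only identifies $D_t$ as the mean of $M_t/b(t)$ with the jumps below $-b(t)$ removed. That identity holds for any scale in place of $b$, including scales for which $D_t\to\infty$, so it bounds nothing by itself.
\item The inequality $\int_{-\infty}^{-r}|y|\,\nu(dy)\le Cr\,\nu((-\infty,-r/2))$ is a property of the negative tail of $\nu$. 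Condition~\ref{MainCondition01} imposes nothing on that tail, so O-regular variation of $b$ cannot supply it. It fails for $\nu((-\infty,-r))=r^{-1}(\log r)^{-2}$: the left side is of order $(\log r)^{-1}$, the right side of order $(\log r)^{-2}$. This $\nu$ is compatible with Condition~\ref{MainCondition01} with $b(t)=t/\log t$, and there in fact $M_t/b(t)\to1$ in probability precisely because $D_t\to1$.
\item Replacing $1/b(t)$ by $c/b(t)$ still leaves a term of order $c\,D_t$, and it changes the statement.
\end{itemize}

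The missing idea is to extract the drift bound from tightness. Write $M_t=Y_t+J_t$ with $J_t=\int_0^t\int_{-\infty}^{-b(t)}y\,N(ds,dy)\le0$ independent of $Y_t$. Since $t\,\nu((-\infty,-b(t)])\le C_0$, you have $\mathbf P(J_t=0)\ge e^{-C_0}$. Hence $\mathbf P(Y_t\ge Kb(t))\le e^{C_0}\,\mathbf P(M_t\ge Kb(t))\le 1/4$ for large $K$ and $t$, by (1). Since $\mathbf E[Y_t/b(t)]=D_t$ and $\mathrm{Var}(Y_t/b(t))\le C_0$, Chebyshev forces $D_t\le K+2\sqrt{C_0}$. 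Equivalently, you could cite the full tightness criterion for infinitely divisible laws, which includes boundedness of the drift for a fixed truncation; by centering, that drift is exactly $D_t$. You only invoked its variance and tail parts.

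\textbf{Gap in (4): small $s$.} With $\lambda=\log y$, your bound $\mathbf P(M_s>y)\le e^{CTy-y\log y}$ does not depend on $s$. Dividing by $s$ therefore only works for $s\ge 1/y$. For $s<1/y$ the inequality $1\le sy$ is false and no factor $s$ appears, yet uniformity as $s\downarrow0$ is the whole content of (4). You need $\lambda$ to depend on $s$, e.g.\ $\lambda=\log(y/s)$, so that $\lambda\ge1$ once $y\ge eT$. This gives $\mathbf P(M_s>y)\le e^{Cy}(s/y)^y$, hence $\mathbf P(M_s>y)/s\le e^{Cy}T^{y-1}y^{-y}$ for $y\ge1$. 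The paper's choice $\lambda=(1-\eta)\log(y/t)$ serves the same purpose.

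\textbf{Minor points.} In (3), $\psi(\lambda)=O(\lambda^2)$ near $0$ is false when $\int_{-\infty}^{-1}y^2\,\nu(dy)=\infty$. However, $\psi(\lambda)=o(\lambda)$ holds by dominated convergence, using $\int_{-\infty}^{-1}|y|\,\nu(dy)<\infty$; that suffices and is what the paper uses. Your (1) is correct. The paper instead uses the union bound $\{|M_t|\ge Kb(t)\}\subset\{|X_t|\ge Kb(t)/2\}\cup\{|Z_t|\ge Kb(t)/2\}$ together with the $\alpha$-moment bound on $Z_t$, so the second-moment worry never arises.
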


 \begin{lemma}\label{Lemma.301}
 	For any constants $\theta>0$, $ K\geq 0$ and
 	$\delta_1\geq \delta_0>0$, we have as $t\to\infty$, 
 	\beqlb\label{eqn.301}
 	\sup_{0<s\leq t} \sup_{x\geq \theta t} \sup_{|y|\leq Kb(x)} \sup_{\delta\in[\delta_0,\delta_1]} 
 	\bigg| 	\frac{ \mathbf P\big( Z_s\in x-y+\Delta_\delta \big)}{s\cdot \nu\big(x+\Delta_\delta\big)} -1 \bigg| \to 0.
 	\eeqlb
 	Moreover, there exist some constants $C,t_0>0$ such that for all $t\geq t_0$,
 	\beqlb\label{eqn.3011}
 	\sup_{0<s\leq t} \sup_{x\geq \theta t} \sup_{y\leq Kb(x)} \sup_{\delta\in[\delta_0,\delta_1]}
 	\frac{ \mathbf P\big( Z_s\in x-y+\Delta_\delta \big)}{s\cdot \nu\big(x+\Delta_\delta\big)}  \leq C.
 	\eeqlb
 \end{lemma}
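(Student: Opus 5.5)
We will prove Lemma~\ref{Lemma.301} by writing $Z$ as a compensated compound Poisson process and running a one-big-jump argument. Let $\lambda:=\nu(1,\infty)=1$ (by the convention $\overline{\nu}(1)=1$) and $\mu:=\int_1^\infty y\,\nu(dy)<\infty$. Then
\begin{equation*}
Z_s = S_{N_s} - \mu s,
\end{equation*}
where $N$ is a rate-one Poisson process and $S_n=\xi_1+\cdots+\xi_n$ with i.i.d.\ $\xi_i\sim\nu|_{(1,\infty)}$. Conditioning on $N_s=n$ gives
\begin{equation*}
\mathbf P\big(Z_s\in x-y+\Delta_\delta\big)=\sum_{n\ge1}e^{-s}\frac{s^n}{n!}\,\mathbf P\big(S_n\in x-y+\mu s+\Delta_\delta\big).
\end{equation*}
The shift $\mu s\le \mu t\le \mu x/\theta$ is of linear order, not $b$-order, so it cannot simply be absorbed into $y$. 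This forces us to analyse the random walk $S_n-\mu n$ together with the Poisson fluctuation of $N_s$ around $s$.

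\textbf{Decomposition by the big jump.} We split $\{N_s=n\}$ according to whether exactly one $\xi_i$ exceeds $x/2$ (or some level $\epsilon x$), whether at least two do, or whether none does.

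\emph{One big jump.} On the first event, the remaining compensated sum $R:=\sum_{j\ne i}\xi_j-\mu s$ has the law of $Z_s$ restricted to jumps $\le \epsilon x$, minus the compensator of the removed mass. Its fluctuation is of order $b(s)\lesssim b(x)$, using the first part of Lemma~\ref{Lemma.302}-type tightness for the truncated process together with the moment bound $\int x^\alpha\nu(dx)<\infty$ and $\liminf b(t)/t^{1/\alpha}>0$. The contribution is then
\begin{equation*}
s\,\mathbf E\big[\nu\big(x-y-R+\Delta_\delta\big);\,|R|\le K'b(x)\big],
\end{equation*}
which equals $s\,\nu(x+\Delta_\delta)(1+o(1))$ uniformly by Proposition~\ref{Prop.205} and Proposition~\ref{Prop.204}, since $b(x)$ and $b(x\pm y)$ are comparable. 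The factor $e^{-s}$ and the combinatorics give exactly $s\nu(\cdot)$ up to $(1+O(s\,\overline\nu(\epsilon x)))$. On $|R|>K'b(x)$ with $R\le x/2$, we bound $\nu(x-y-R+\Delta_\delta)/\nu(x+\Delta_\delta)$ by Proposition~\ref{Prop.203} by $e^{\eta(R/b(x)+1)}$. This is integrable against the law of $R$ provided we have an exponential moment $\mathbf E[e^{\eta R/b(x)}]$ bounded; that is the analogue of Lemma~\ref{Lemma.302}(2) for truncated jumps, obtained through the standard Fuk--Nagaev/exponential-Chebyshev bound with truncation at $\epsilon x$ and $\eta$ small. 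The tail $|R|\to$ large is killed by letting $K'\to\infty$. The case $R>x/2$ is handled by the large-deviation bound for the truncated walk. That bound is of the form $(s\,\overline\nu(\epsilon x)\cdot\text{poly})^{1/\epsilon}$ or exponential in $x/b$, and it beats the lower bound $e^{-Cx^{1-1/\alpha}}$ of Proposition~\ref{Prop.202} because $x/b(x)\gtrsim x^{1-1/\alpha}\cdot$(large) only after a careful choice. Here we use $b(x)=o(x)$ together with $x^{1/\alpha}\lesssim b(x)$, and choose $\epsilon$ small.

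\emph{Two or more big jumps.} We bound this contribution by $s^2\sup_{z}\nu(z+\Delta_\delta)\,\overline\nu(\epsilon x)$-type convolution terms. These are $o(s\nu(x+\Delta_\delta))$ uniformly in $s\le t$ thanks to two inputs. First, the class assumption $x^\alpha\nu(x+\Delta_\delta)\in\mathcal Sd\cup\mathcal{OR}$ bounds the convolution by $C\nu(x+\Delta_\delta)$. Second, $s\,\overline\nu(\epsilon x)\le t\,\overline\nu(\epsilon\theta t)=o(t^{1-\alpha})\to0$. The almost-decrease gives uniformity over positions, via \eqref{eqn.2021}. This also gives the crude bound \eqref{eqn.3011}, including $y\le Kb(x)$ unbounded below, since negative $y$ only moves the target rightward and $\mathcal{AD}$ controls it.

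\emph{No big jump.} This is $\mathbf P(\text{truncated }Z_s>x/2)$, again negligible by the same exponential estimate.

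\textbf{Main obstacle.} The hardest step is the uniform exponential/large-deviation control of the truncated compensated sum at scale $b(x)$, uniformly in $s\in(0,t]$ and including small $s$, where relative error must be measured against $s\nu$. For small $s$ we condition on $N_s\ge1$, so the factor $s$ appears naturally. We will first try to treat it by an exponential Chebyshev bound with parameter $\eta/b(x)$ and truncation level $b(x)$-multiples, mirroring Proposition~\ref{Prop.203}.

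Finally, uniformity in $\delta\in[\delta_0,\delta_1]$ comes from Propositions~\ref{Prop.207}--\ref{Prop.205}.
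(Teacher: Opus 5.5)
There is a genuine gap, and it sits exactly in what you call the main obstacle. In the one-big-jump case you integrate the weight $e^{\eta(R/b(x)+1)}$ of Proposition~\ref{Prop.203} against the law of the remainder $R$ on $\{K'b(x)<R\le x/2\}$. To do so you claim that $\mathbf E[e^{\eta R/b(x)}]$ is bounded for the sum truncated at $\epsilon x$. That is false, because $R$ still contains jumps up to $\epsilon x\gg b(x)$:
\[
\log\mathbf E\big[e^{\eta R/b(x)}\big]= s\int_1^{\epsilon x}\Big(e^{\eta y/b(x)}-1-\frac{\eta y}{b(x)}\Big)\nu(dy)\ \ge\ s\int_{\epsilon x/2}^{\epsilon x}\Big(e^{\eta y/b(x)}-1-\frac{\eta y}{b(x)}\Big)\nu(dy).
\]
Take the prototypical case of a regularly varying $\nu$ with $b(x)=x^{1/\alpha}$. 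The right-hand side is then at least of order $s\,e^{c x^{1-1/\alpha}}x^{-\beta}$, so the exponential moment explodes. In the paper this weight is only ever integrated against $M$ (no positive jumps above $1$, Lemma~\ref{Lemma.302}(2)) and against the Poisson count $N_s$. Both of these do have exponential moments at scale $b$.

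If you instead truncate at multiples of $b(x)$, the jumps in $(Cb(x),\epsilon x]$ become intermediate jumps, and their contribution to the local probability must be controlled separately. That is the hard core of the Denisov--Dieker--Shneer theory, where the assumption $x^\alpha\nu(x+\Delta_\delta)\in\mathcal{S}d\cup\mathcal{OR}$ and the truncation sequence $n^{1/\alpha}$ enter. Your sketch puts nothing in its place. The no-big-jump and $R>x/2$ estimates have the same problem, and the comparison you invoke there is backwards. Since $\liminf_{t\to\infty} b(t)/t^{1/\alpha}>0$, we have $x/b(x)\le Cx^{1-1/\alpha}$. So a bound $e^{-cx/b(x)}$ is not $o(e^{-Cx^{1-1/\alpha}})$ and cannot be played against Proposition~\ref{Prop.202}. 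A standard, polynomial Fuk--Nagaev bound does not suffice either when $\nu$ has Weibull-type tails.

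The missing idea is already in your first paragraph, which you then abandon. Condition on $N_s=n$ and centre at $mn$ rather than $ms$, i.e.\ write $Z_s\overset{\rm d}{=}\widetilde S_{N_s}+m(N_s-s)$ with $\widetilde S_n=S_n-mn$. The event $\{|N_s-s|\le Rb(t)\}$ carries almost all the mass, since $t/b(t)^2$ is bounded. On it the shift $m(n-s)$ is $O(b(x))$ and is absorbed by insensitivity. The main term then follows from Corollary~2.1 of \cite{DenisovDiekerShneer2008} for large $n$. For small $n$ it follows from the fixed-$n$ local subexponential asymptotics together with the Kesten-type bound $C(1+\rho)^n\nu(x+\Delta_\delta)$ (Propositions~\ref{Prop.304}--\ref{Prop.305}).

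All other regions involve only deviations of the Poisson count:
\begin{itemize}
\item $\{n-s<-Rb(t)\}$ is handled by Chebyshev and the uniform bound \eqref{eqn.3262};
\item $\{Rb(t)<n-s<x/(2m)\}$ is handled by Proposition~\ref{Prop.203} integrated against $\mathbf E[e^{(m+1)(N_s-s+1)/b(\theta t)}]$;
\item $\{n-s\ge x/(2m)\}$ is handled by the Poisson Chernoff bound $e^{-cx}$, which does beat Proposition~\ref{Prop.202}.
\end{itemize}
This delegates all the intermediate-jump analysis to the random-walk theorem instead of redoing it.
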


 \textit{\textbf{Proof of Theorem~\ref{Main.Thm}.}} 
 Without loss of generality, we prove the theorem for the case $\delta_0=1$; the general case follows by an identical argument.
 The proof consists of two steps. First, we establish that the limit \eqref{main.eqn} holds uniformly in  $\delta\in [1,2]$, that is,
 \beqlb\label{main.eqn01}
 \sup_{0<s\leq t}\sup_{x\geq \theta t}  \sup_{|y|\leq Kb(x)}  \sup_{\delta\in[1,2]} \bigg| \frac{\mathbf{P}\big( X_s\in  x-y+\Delta_\delta \big)}{ s\cdot  \nu\big(x+\Delta_\delta\big) } -1 \bigg| \to 0 . 
 \eeqlb
 Second, we extend this uniform convergence to all $\delta \geq 1$ by the method of interval partition. 
 
 \textbf{Step 1.} First, by the  triangle inequality we  have 
 \beqnn
 \bigg| \frac{\mathbf{P}\big( X_s\in  x-y+\Delta_\delta \big)}{ s\cdot  \nu\big(x+\Delta_\delta\big) } -1 \bigg| 
 \leq \bigg| \frac{\nu\big(x-y+\Delta_\delta\big)}{   \nu\big(x+\Delta_\delta\big) } -1 \bigg| 
 +  \bigg| \frac{\mathbf{P}\big( X_s\in  x-y+\Delta_\delta \big)}{ s\cdot  \nu\big(x-y+\Delta_\delta\big) } -1 \bigg| \cdot   \frac{\nu\big(x-y+\Delta_\delta\big)}{   \nu\big(x+\Delta_\delta\big) } ,
 \eeqnn
 which follows that for large $t$,
 \beqnn
 \lefteqn{ \sup_{0<s\leq t} \sup_{x\geq \theta t} \sup_{|y|\leq Kb(x)}  \sup_{\delta\in[1,2]}\bigg| \frac{\mathbf{P}\big( X_s\in  x-y+\Delta_\delta \big)}{ s\cdot  \nu\big(x+\Delta_\delta\big) } -1 \bigg| }\ar\ar\cr
 \ar\leq\ar \sup_{x\geq \theta t}  \sup_{|y|\leq Kb(x)}  \sup_{\delta\in[1,2]} \bigg| \frac{\nu\big(x-y+\Delta_\delta\big)}{   \nu\big(x+\Delta_\delta\big) } -1 \bigg|\cr
 \ar\ar + \sup_{0<s\leq t}\sup_{x\geq \theta t}  \sup_{|y|\leq Kb(x)}    \sup_{\delta\in[1,2]}\bigg| \frac{\mathbf{P}\big( X_s\in  x-y+\Delta_\delta \big)}{ s\cdot  \nu\big(x-y+\Delta_\delta\big) } -1 \bigg| \cdot \sup_{x\geq \theta t}  \sup_{|y|\leq Kb(x)}  \sup_{\delta\in[1,2]} \frac{\nu\big(x-y+\Delta_\delta\big)}{   \nu\big(x+\Delta_\delta\big) } \cr
 \ar\leq\ar \sup_{x\geq \theta t}  \sup_{|y|\leq Kb(x)}  \sup_{\delta\in[1,2]} \bigg| \frac{\nu\big(x-y+\Delta_\delta\big)}{   \nu\big(x+\Delta_\delta\big) } -1 \bigg|\cr
 \ar\ar + \sup_{0<s\leq t}\sup_{x\geq \theta t/2}    \sup_{\delta\in[1,2]}\bigg| \frac{\mathbf{P}\big( X_s\in  x+\Delta_\delta \big)}{ s\cdot  \nu\big(x+\Delta_\delta\big) } -1 \bigg| \cdot \sup_{x\geq \theta t}  \sup_{|y|\leq Kb(x)}  \sup_{\delta\in[1,2]} \frac{\nu\big(x-y+\Delta_\delta\big)}{   \nu\big(x+\Delta_\delta\big) } . 
 \eeqnn
 By Proposition~\ref{Prop.205}, we see that the limit \eqref{main.eqn01} holds for any $K\geq 0$ if and only if it holds with $K=0$, which follows immediately if 
 \beqlb\label{main.eqn02}
 \lim_{t\to\infty} \sup_{0<s\leq T}\sup_{x\geq \theta t}   \sup_{\delta\in[1,2]} \bigg| \frac{\mathbf{P}\big( X_s\in  x+\Delta_\delta \big)}{ s\cdot  \nu\big(x+\Delta_\delta\big) } -1 \bigg|   = 0, \quad \forall\, T>0,
 \eeqlb
 and  
 \beqlb\label{main.eqn03}
 \lim_{T\to\infty}  \lim_{t\to\infty}
 \sup_{T<s\leq t}\sup_{x\geq \theta t}   \sup_{\delta\in[1,2]} \bigg| \frac{\mathbf{P}\big( X_s\in  x+\Delta_\delta \big)}{ s\cdot  \nu\big(x+\Delta_\delta\big) } -1 \bigg|  =0.
 \eeqlb
 We now prove these two limits. Firstly, the independence between $M$ and $Z$ yields that 
 \beqnn
 \mathbf{P} \big( X_s\in x+\Delta_\delta \big) 
 \ar=\ar
 \int_{\mathbb{R}} \mathbf{P}\big(  Z_s\in x-y+\Delta_\delta \big)\, \mathbf{P}(M_s\in dy), 
 \eeqnn
 which, for any $R\in \mathbb{Z}_+$, can be decomposed into the following three parts:
 \beqlb
 I_1(R,s,x,\delta)\ar:=\ar \int_{-R b(x)}^{R b(x)}\mathbf{P}\big(  Z_s\in x-y+\Delta_\delta \big)\, \mathbf{P}(M_s\in dy) , \label{eqn.305}\\
 I_2(R,s,x,\delta)\ar:=\ar \int_{Rb(x)}^\infty\mathbf{P}\big(  Z_s\in x-y+\Delta_\delta \big)\, \mathbf{P}(M_s\in dy), \label{eqn.306}\\
 I_3(R,s,x,\delta)\ar:=\ar \int_{-\infty}^{- Rb(x)}\mathbf{P}\big(  Z_s\in x-y+\Delta_\delta \big)\, \mathbf{P}(M_s\in dy) . \label{eqn.307}
 \eeqlb
 
 We first prove the limit \eqref{main.eqn02} for any $T>0$ fixed. By the triangle inequality, we have 
 \beqlb\label{eqn.308}
 \bigg| \frac{\mathbf{P}\big( X_s\in  x+\Delta_\delta \big)}{ s\cdot  \nu\big(x+\Delta_\delta\big) } -1 \bigg| 
 \leq \bigg| \frac{I_1(1,s,x,\delta)}{ s\cdot  \nu\big(x+\Delta_\delta\big) } -1 \bigg|   
 + 
 \frac{I_2(1,s,x,\delta)}{ s\cdot  \nu\big(x+\Delta_\delta\big) }  
 +   \frac{I_3(1,s,x,\delta)}{ s\cdot  \nu\big(x+\Delta_\delta\big) } . 
 \eeqlb
 \begin{enumerate}
 	\item[$\bullet$] From \eqref{eqn.305} with $R=1$, the first term on the right-hand side can be bounded by
 	\beqnn
 	\int_{- b(x)}^{ b(x)} \bigg|\frac{\mathbf{P}\big(  Z_s\in x-y+\Delta_\delta \big)}{s\cdot  \nu\big(x+\Delta_\delta\big)}-1\bigg|\, \mathbf{P}(M_s\in dy)  
 	+ \mathbf{P}\big(|M_s|\geq b(x)\big) . 
 	\eeqnn 
 	By Lemma~\ref{Lemma.301} with $K=1$, we have 
 	\beqnn
 	\lefteqn{ \lim_{t\to\infty} \sup_{0<s\leq T}\sup_{x\geq \theta t}  \sup_{\delta\in[1,2]}   \int_{- b(x)}^{ b(x)} \bigg|\frac{\mathbf{P}\big(  Z_s\in x-y+\Delta_\delta \big)}{s\cdot  \nu\big(x+\Delta_\delta\big)}-1\bigg|\, \mathbf{P}(M_s\in dy)  }\ar\ar\cr
 	\ar\leq\ar   \lim_{t\to\infty} \sup_{0<s\leq T}\sup_{x\geq \theta t}  \sup_{|y|\leq b(x)}  \sup_{\delta\in[1,2]}  \bigg|\frac{\mathbf{P}\big(  Z_s\in x-y+\Delta_\delta \big)}{s\cdot  \nu\big(x+\Delta_\delta\big)}-1\bigg| =0. 
 	\qquad \qquad  
 	\eeqnn
 	Moreover, by the increase of $b$ to infinity, we also have as $t\to \infty$,
 	\beqnn
 	\sup_{s\in(0,T]} \sup_{x\geq \theta t}\mathbf{P}\big(|M_s|\geq b(x)\big) 
 	\ar\leq \ar  \mathbf{P}\Big(  \sup_{s\in(0,T]} |M_s|\geq b(\theta t)\Big) 
 	\to 0. 
 	\eeqnn
 	Putting these two limits together, we get 
 	\beqlb\label{eqn.309}
 	\lim_{t\to\infty} \sup_{0<s\leq T}\sup_{x\geq \theta t}   \sup_{\delta\in[1,2]} \bigg| \frac{I_1(1,s,x,\delta)}{ s\cdot  \nu\big(x+\Delta_\delta\big) } -1 \bigg|    =0. 
 	\eeqlb
 	
 	\item[$\bullet$]
 	By \eqref{eqn.306}, we have $I_2(1,s,x,\delta) \leq \mathbf{P}\big( M_s>b(x) \big)$  and then 
 	\beqnn
 	\sup_{0<s\leq T}\sup_{x\geq \theta t}   \sup_{\delta\in[1,2]} \frac{I_2(1,s,x,\delta)}{ s\cdot  \nu\big(x+\Delta_\delta\big) } 
 	\leq \sup_{0<s\leq T}\sup_{x\geq \theta t}   \sup_{\delta\in[1,2]} \frac{\mathbf{P}\big( M_s>b(x) \big)}{s\cdot  \nu\big(x+\Delta_\delta\big)} .
 	\eeqnn
 	By Proposition~\ref{Prop.202} and Lemma~\ref{Lemma.302}(4), there exist two constants $C,x_0>0$  such that for all $x\geq x_0$,
 	\beqnn
 	\sup_{\delta\in[1,2]} \frac{1}{\nu\big(x+\Delta_\delta\big)} \leq  \exp\{ C\cdot x^{1-1/\alpha} \}
 	\quad \mbox{and}\quad 
 	\sup_{0<s\leq T} \frac{\mathbf{P}\big( M_s>b(x) \big)}{s } \leq  \exp \Big\{- \frac{1}{2}\cdot b(x)  \log b(x) \Big\},
 	\eeqnn
 	which follows that 
 	\beqnn
 	\sup_{0<s\leq T}\sup_{x\geq \theta t}   \sup_{\delta\in[1,2]} \frac{I_2(1,s,x,\delta)}{ s\cdot  \nu\big(x+\Delta_\delta\big) }  
 	\leq  \sup_{x\geq \theta t} \exp\big\{ -\frac{1}{2}\cdot b(x)  \log b(x)+ C\cdot x^{1-1/\alpha} \big\} .
 	\eeqnn 
 	Recall the fact that $\alpha \in(1,2]$ and $b(x)\geq C \cdot x^{1/\alpha}$ for large $t$; see \eqref{eqn.Con01}, we have $b(x)  \log b(x) \geq C \cdot x^{1/\alpha}\log x$ and then $b(x)  \log b(x)- x^{1-1/\alpha} \to \infty$ as $x\to\infty$.  
 	This induces that 
 	\beqlb\label{eqn.310}
 	\lim_{t\to\infty}   \sup_{0<s\leq T}\sup_{x\geq \theta t}   \sup_{\delta\in[1,2]} \frac{I_2(1,s,x,\delta)}{ s\cdot  \nu\big(x+\Delta_\delta\big) }   =0. 
 	\eeqlb
 	
 	\item[$\bullet$] 
 	We now consider the last term in \eqref{eqn.308}. By  \eqref{eqn.307}, we have 
 	\beqlb \label{eqn.311}
 	\frac{I_3(1,s,x,\delta)}{ s\cdot  \nu\big(x+\Delta_\delta\big) }  
 	\ar=\ar   \int_{-\infty}^{- b(x)}\frac{\mathbf{P}\big(  Z_s\in x-y+\Delta_\delta \big)}{s\cdot  \nu\big(x-y+\Delta_\delta\big) }\cdot \frac{  \nu\big(x-y+\Delta_\delta\big)}{  \nu\big(x+\Delta_\delta\big)}\, \mathbf{P}(M_s\in dy),
 	\eeqlb
 	which along with the uniform upper bound \eqref{eqn.3011} induces that  as $t \to\infty$,
 	\beqlb\label{eqn.313}
 	\sup_{0<s\leq T} \sup_{x\geq \theta t}   \sup_{\delta\in[1,2]}\frac{I_3(1,s,x,\delta)}{ s\cdot  \nu\big(x+\Delta_\delta\big) }  \leq C\cdot   \sup_{x\geq \theta t} \sup_{0<s\leq T} \mathbf{P}\big( M_s \leq -b(x) \big) \to 0 ,
 	\eeqlb
 	since $\inf_{s\in[0,T]} M_s>-\infty$ a.s. and $\inf_{x\geq\theta t }b(x)\to\infty$. 
 \end{enumerate}
 The  limit \eqref{main.eqn02} follows immediately by taking \eqref{eqn.309}, \eqref{eqn.310} and \eqref{eqn.313} back into  \eqref{eqn.308}.

 We now turn to prove the limit \eqref{main.eqn03}. 
 For any large $R\in \mathbb{Z}_+$, since $b(z)=o(z)$ as $z\to\infty$, we see that $Rb(t)\leq \theta t /2$ for all $t>0$ large enough. 
 Without loss of generality, we always assume that  
 \beqnn
 t>T>0
 \quad \mbox{and}\quad
 Rb(t)\leq \theta t /2.
 \eeqnn
 We first decompose $I_2(R,s,x,\delta)$ into the following two parts:
 \beqlb
 I_{21}(R,s,x,\delta)\ar:=\ar \int_{R b(x)}^{x/2}\mathbf{P}\big(  Z_s\in x-y+\Delta_\delta \big)\, \mathbf{P}(M_s\in dy), \label{eqn.3061}\\
 I_{22}(R,s,x,\delta)\ar:=\ar \int_{x/2}^\infty \mathbf{P}\big(  Z_s\in x-y+\Delta_\delta \big)\, \mathbf{P}(M_s\in dy). \label{eqn.3062} 
 \eeqlb
 Consequently, in view of \eqref{eqn.308} we have for all $R\in \mathbb{Z}_+$ and large $t>T$, 
 \beqlb\label{eqn.316}
 \bigg| \frac{\mathbf{P}\big( X_s\in  x+\Delta_\delta \big)}{ s\cdot  \nu\big(x+\Delta_\delta\big) } -1 \bigg| 
 \ar\leq\ar \bigg| \frac{I_1(R,s,x,\delta)}{ s\cdot  \nu\big(x+\Delta_\delta\big) } -1 \bigg|   
 + 
 \frac{I_{21}(R,s,x,\delta)}{ s\cdot  \nu\big(x+\Delta_\delta\big) } \cr
 \ar\ar + \frac{I_{22}(R,s,x,\delta)}{ s\cdot  \nu\big(x+\Delta_\delta\big) }  + \frac{I_3(R,s,x,\delta)}{ s\cdot  \nu\big(x+\Delta_\delta\big) } .   
 \eeqlb
 \begin{enumerate}
 	\item[$\bullet$] For the first term on the right-hand side, by \eqref{eqn.305} we have 
 	\beqnn
 	\bigg| \frac{I_1(R,s,x,\delta)}{ s\cdot  \nu\big(x+\Delta_\delta\big) } -1 \bigg|
 	\ar\leq\ar \int_{-R b(x)}^{R b(x)}\bigg| \frac{\mathbf{P}\big(  Z_s\in x-y+\Delta_\delta \big)}{s\cdot  \nu\big(x+\Delta_\delta\big)}-1 \bigg|\, \mathbf{P}(M_s\in dy) + \mathbf{P}\big(|M_s|\geq Rb(x)\big),
 	\eeqnn
 	which follows that 
 	\beqnn
 	\lefteqn{\sup_{T<s\leq t}\sup_{x\geq \theta t}   \sup_{\delta\in[1,2]}   \bigg| \frac{I_1(R,s,x,\delta)}{ s\cdot  \nu\big(x+\Delta_\delta\big) } -1 \bigg| }\ar\ar\cr
 	\ar\leq\ar \sup_{T<s\leq t}\sup_{x\geq \theta t} \sup_{|y|\leq Rb(x)}  \sup_{\delta\in[1,2]} \bigg| \frac{\mathbf{P}\big(  Z_s\in x-y+\Delta_\delta \big)}{s\cdot  \nu\big(x+\Delta_\delta\big)}-1 \bigg| + \sup_{T<s\leq t}\sup_{x\geq \theta t}  \mathbf{P}\big(|M_s|\geq Rb(x)\big). 
 	\eeqnn
 	The first term on the right-hand side vanishes as $t\to\infty$; see Lemma~\ref{Lemma.301}. 
 	For the second term, by using Proposition~\ref{Prop.204} and then Lemma~\ref{Lemma.302}(1), we have  as  $T\to\infty$ and then $R\to\infty$,
 	\beqlb\label{eqn.317}
 	\sup_{T<s\leq t}\sup_{x\geq \theta t}  \mathbf{P}\big(|M_s|\geq Rb(x)\big) 
 	\leq \sup_{s>T}  \mathbf{P}\big(|M_s|\geq CRb(s)\big) \to 0.
 	\eeqlb
  In conclusion, we have  
 	\beqlb\label{eqn.318}
 	\lim_{R\to\infty}\lim_{T\to\infty}\limsup_{t\to\infty} \sup_{T<s\leq t}\sup_{x\geq \theta t}   \sup_{\delta\in[1,2]}   \bigg| \frac{I_1(R,s,x,\delta)}{ s\cdot  \nu\big(x+\Delta_\delta\big) } -1 \bigg| =0.
 	\eeqlb
 	
 	\item[$\bullet$]
 	For the second term on the right-hand side of \eqref{eqn.316}, by \eqref{eqn.3061} we have 
 	\beqnn
 	\frac{I_{21}(R,s,x,\delta)}{ s\cdot  \nu\big(x+\Delta_\delta\big) } 
 	\ar=\ar   \int_{R b(x)}^{x/2} \frac{\mathbf{P}\big(  Z_s\in x-y+\Delta_\delta \big)}{s\cdot  \nu\big(x-y+\Delta_\delta\big) }\cdot \frac{\nu\big(x-y+\Delta_\delta\big) }{  \nu\big(x+\Delta_\delta\big) }\, \mathbf{P}(M_s\in dy). \qquad
 	\eeqnn
 	By \eqref{eqn.301} with $K=0$, there exists a constant $t_0>0$ such that for all $t\geq t_0$,
 	\beqnn
 	\sup_{0<s\leq t}\sup_{x\geq \theta t}   \sup_{\delta\in[1,2]} \sup_{R b(x)<y\leq x/2}\frac{\mathbf{P}\big(  Z_s\in x-y+\Delta_\delta \big)}{s\cdot  \nu\big(x-y+\Delta_\delta\big) }
 	\leq \sup_{0<s\leq t}\sup_{x\geq \theta t/2}   \sup_{\delta\in[1,2]}  \frac{\mathbf{P}\big(  Z_s\in x+\Delta_\delta \big)}{s\cdot  \nu\big(x+\Delta_\delta\big) } \leq 2, 
 	\eeqnn
 	which follows that 
 	\beqlb\label{eqn.319}
 	\sup_{T<s\leq t}\sup_{x\geq \theta t}   \sup_{\delta\in[1,2]}   \frac{I_{21}(R,s,x,\delta)}{ s\cdot  \nu\big(x+\Delta_\delta\big) } 
 	\ar\leq \ar
 	2\cdot   \sup_{T<s\leq t}\sup_{x\geq \theta t}   \sup_{\delta\in[1,2]}     \int_{R b(x)}^{x/2} \frac{\nu\big(x-y+\Delta_\delta\big) }{  \nu\big(x+\Delta_\delta\big) }\, \mathbf{P}(M_s\in dy).\quad 
 	\eeqlb
 	For any constant $\eta>0$ to be specified later, by Proposition~\ref{Prop.203}  there exist two constants $C, t_0>0$ such that for all $t\geq t_0$, $x\geq \theta t$ and $Rb(x)<y\leq x/2$, 
 	\beqnn
 	\sup_{\delta\in[1,2]} \frac{  \nu\big(x-y+\Delta_\delta\big)}{ \nu\big(x+\Delta_\delta\big)} \leq C\cdot \exp \Big\{ \eta \cdot \frac{y}{b(x)} \Big\} . 
 	\eeqnn
 	Taking these two estimates back into \eqref{eqn.319}, we have that for all $t\geq t_0$, 
 	\beqnn
 	\sup_{T<s\leq t}\sup_{x\geq \theta t}   \sup_{\delta\in[1,2]}   \frac{I_{21}(R,s,x,\delta)}{ s\cdot  \nu\big(x+\Delta_\delta\big) } 
 	\ar\leq \ar C\cdot \sup_{T<s\leq t}\sup_{x\geq \theta t}  \int_{Rb(x)}^{x/2}\exp \Big\{ \eta \cdot \frac{y}{b(x)} \Big\} \, \mathbf{P}(M_s\in dy)\cr
 	\ar \leq\ar  C\cdot \sup_{T<s\leq t}\sup_{x\geq \theta t} \sum_{k=R}^\infty \int_{k b(x)}^{(k+1)b(x)}\exp \Big\{ \eta \cdot \frac{y}{b(x)} \Big\} \, \mathbf{P}(M_s\in dy) \cr
 	\ar\leq\ar C  \sum_{k=R}^\infty e^{ \eta \cdot (k+1)} \cdot   \sup_{T<s\leq t} \sup_{x\geq \theta t} \mathbf{P}\big(M_s\geq kb(x)\big) .
 	\eeqnn
 	By Chebyshev's inequality, we have for all $k\geq R$,
 	\beqnn
 	\mathbf{P}\big(M_s\geq kb(x)\big) 
 	\leq     \mathbf{E}\Big[ \exp\Big\{\frac{M_s}{b(s)} \Big\} \Big] \cdot \exp\Big\{-k\cdot     \frac{b(x)}{b(s)}  \Big\} .
 	\eeqnn
 	By Proposition~\ref{Prop.204} and Lemma~\ref{Lemma.302}(2),  there exist  constants $C,c_0>0$ such that for large $T\geq t_0$, 
 	\beqnn
 	\sup_{s\geq T} \mathbf{E}\Big[ \exp\Big\{\frac{M_s}{b(s)} \Big\} \Big] \leq C
 	\quad \mbox{and}\quad \inf_{t\geq T} \inf_{0<s\leq t} \inf_{x\geq \theta t}  \frac{b(x)}{b(s)} \geq c_0,
 	\eeqnn
 	which follows that  $
 	\sup_{T<s\leq t} \sup_{x\geq \theta t} \mathbf{P}\big(M_s\geq kb(x)\big)
 	\leq C\cdot e^{-c_0\cdot k}$
 	and 
 	\beqnn
 	\sup_{T<s\leq t}\sup_{x\geq \theta t}   \sup_{\delta\in[1,2]}   \frac{I_{21}(R,s,x,\delta)}{ s\cdot  \nu\big(x+\Delta_\delta\big) }   \leq C \sum_{k=R}^\infty e^{ \eta \cdot (k+1)-c_0\cdot k} <\infty,
 	\eeqnn
 	which  vanishes as $R\to\infty$ if we choose $0<\eta <c_0$. Consequently,  
 	\beqlb\label{eqn.3181}
 	\lim_{R\to\infty}\lim_{T\to\infty} \limsup_{t\to\infty}    \sup_{T<s\leq t}\sup_{x\geq \theta t}   \sup_{\delta\in[1,2]}   \frac{I_{21}(R,s,x,\delta)}{ s\cdot  \nu\big(x+\Delta_\delta\big) }  =0. 
 	\eeqlb
 	
 	\item[$\bullet$] For the third term on the right-hand side of \eqref{eqn.316}, by \eqref{eqn.3062} we always have 
 	\beqlb\label{eqn.220}
 	\frac{I_{22}(R,s,x,\delta)}{ s\cdot  \nu\big(x+\Delta_\delta\big) } \leq  \frac{\mathbf{P}(M_s\geq x/2)}{s\cdot  \nu\big(x+\Delta_\delta\big)}  . 
 	\eeqlb
 	Firstly, by Proposition~\ref{Prop.202} there exist two constants $C_0,x_0>0$ such that for all $x\geq x_0$,
 	\beqnn
 	\inf_{\delta\in[1,2]}   \nu\big(x+\Delta_\delta\big)\geq \exp\{-C_0\cdot x^{1-1/\alpha}\}.
 	\eeqnn
 	Moreover, by Lemma~\ref{Lemma.302}(3)  with $\eta =\theta/2$,  there exist some constants $C_\theta,c_\theta>0$ such that $\mathbf{P}(M_s\geq x/2) \leq C_\theta\cdot \exp\{-c_\theta x\} $ for all $x\geq \theta t\geq \theta s$. 
 	Taking these two estimates back into \eqref{eqn.220}, 
 	\beqlb\label{eqn.321}
 	\lim_{t\to\infty}\sup_{T<s\leq t}\sup_{x\geq \theta t}   \sup_{\delta\in[1,2]}  \frac{I_{22}(R,s,x,\delta)}{ s\cdot  \nu\big(x+\Delta_\delta\big) } 
 	\leq \frac{C}{T} \lim_{t\to\infty} \sup_{x\geq \theta t}  \exp\{-c_\theta x+C_0\cdot x^{1-1/\alpha}\}  =0.
 	\eeqlb

 	\item[$\bullet$]
 	For the last term on the right-hand side of \eqref{eqn.316}, similarly as in \eqref{eqn.311}-\eqref{eqn.313} we also have  
 	\beqlb \label{eqn.322}
 	\sup_{T<s\leq t} \sup_{x\geq \theta t}   \sup_{\delta\in[1,2]}\frac{I_3(R,s,x,\delta)}{ s\cdot  \nu\big(x+\Delta_\delta\big) }  
 	\ar\leq\ar C\cdot   \sup_{x\geq \theta t} \sup_{T<s\leq t} \mathbf{P}\big( M_s \leq -R b(x) \big) \cr
 	\ar\leq\ar C\cdot   \sup_{x\geq \theta t} \sup_{T<s\leq t} \mathbf{P}\big( |M_s| \geq R b(x) \big) \to 0,
 	\eeqlb 
 	as $t\to\infty$, then $T\to\infty$ and finally $R\to\infty$; see \eqref{eqn.317}. 
 \end{enumerate}
 The limit \eqref{main.eqn03} follows by taking \eqref{eqn.318}, \eqref{eqn.3181}  \eqref{eqn.321} and \eqref{eqn.322} back into \eqref{eqn.316}. 
 In conclusion, the limit \eqref{main.eqn01} holds. 
 
 \textbf{Step 2.} We now prove the limit \eqref{main.eqn} with the help of \eqref{main.eqn01}.   
 For any $s,x\geq 0$ and $\delta\geq 1$, by the triangle inequality,
 \beqnn
 \lefteqn{\big|\mathbf{P}\big( X_s\in x-y+\Delta_\delta \big) -s\cdot \nu(x+\Delta_\delta)\big| }\ar\ar\cr
 \ar\leq\ar  \sum_{k=0}^{[\delta]-2} \big| \mathbf{P}\big( X_s\in x-y+k+\Delta_1 \big) -s\cdot \nu(x+k+\Delta_1) \big|\cr\cr
 \ar\ar  + \Big|\mathbf{P}\big( X_s\in x-y+[\delta]-1+\Delta_{\delta-[\delta]+1} \big) -s\cdot \nu\big( x+[\delta]-1+\Delta_{\delta-[\delta]+1} \big) \Big|
 \eeqnn
 with the convention that the sum equals  zero when $[\delta]=1$.
 The limit \eqref{main.eqn01} tells us that there exists a positive function $\varepsilon_t$ decreasing to $0$ at infinity such that for all $t>0$, 
 \beqnn
 \sup_{0<s\leq t}\sup_{x\geq \theta t} \sup_{|y|\leq Kb(x)}  \sup_{\delta'\in[1,2]} \bigg| \frac{\mathbf{P}\big( X_s\in  x-y+\Delta_{\delta'} \big)}{ s\cdot  \nu\big(x+\Delta_{\delta'}\big) } -1 \bigg| \leq \varepsilon_t,
 \eeqnn
 which follows that for all $k\geq 0$,
 \beqnn
 \sup_{0<s\leq t}\sup_{x\geq \theta t} \sup_{|y|\leq Kb(x)}  \sup_{\delta'\in[1,2]} \bigg| \frac{\mathbf{P}\big( X_s\in  x-y+k+\Delta_{\delta'} \big)}{ s\cdot  \nu\big(x+k+\Delta_{\delta'}\big) } -1 \bigg| \leq \varepsilon_t.
 \eeqnn
 Consequently, we have for all $0< s\leq t$, $x\geq \theta t$ and $\delta \geq 1$, 
 \beqnn
 \Big|  \mathbf{P}\big( X_s\in x-y+\Delta_\delta \big) -s\cdot \nu(x+\Delta_\delta)  \Big| 
 \leq  \varepsilon_t \cdot\bigg(\sum_{k=0}^{[\delta]-2} s\cdot  \nu\big(x+k+\Delta_1\big) +s\cdot \nu\big(x+[\delta]-1+\Delta_{\delta-[\delta]+1}\big)  \bigg),
 \eeqnn
 which equals 	$\varepsilon_t\cdot s\cdot \nu(x+\Delta_\delta)$. This yields that 
 \beqnn
 \sup_{0<s\leq t}\sup_{x\geq \theta t}  \sup_{|y|\leq Kb(x)}  \sup_{\delta\in[1,\infty]}  \bigg| \frac{\mathbf{P}\big( X_s\in  x-y+\Delta_\delta \big)}{ s\cdot  \nu\big(x+\Delta_\delta\big) } -1 \bigg| \leq \varepsilon_t,
 \eeqnn
 which goes to $0$ as $t\to\infty$ and then the limit \eqref{main.eqn} holds.
 The proof ends. 
 \qed

 \subsection{Proof of Lemma~\ref{Lemma.302}}

 \textbf{\textit{Proof of Lemma~\ref{Lemma.302}(1).}}  
 Recall that $X_t=M_t+Z_t$, we have  
 \beqnn 
 \mathbf P\big( |M_t|\geq Kb(t) \big) \leq  
 \mathbf P\big(|X_t|\geq Kb(t)/2\big)+ \mathbf P\big(|Z_t|\geq Kb(t)/2\big) . 
 \eeqnn
 The first probability on the right-hand side vanishes as $t\to\infty $ and then $K\to\infty$; see \eqref{eqn.Con01}. For the second one, by using Chebyshev's inequality we have 
 \beqlb\label{eqn.339}
 \mathbf{P}\big( |Z_t|\geq Kb(t) /2\big) 
 \leq \frac{2^\alpha\cdot \mathbf{E}\big[ |Z_t|^\alpha \big]}{|Kb(t)|^\alpha}.
 \eeqlb
 By the Burkholder-Davis-Gundy inequality, there exists a constant $C>0$ such that  for all $t\geq 0$,
 \beqnn
 \mathbf{E}\big[ |Z_t|^\alpha \big] 
 \ar \leq\ar C \cdot  \mathbf E\Big[\Big( \int_0^t\int_1^\infty  y^2\,N(ds,dy)\Big)^{\alpha/2} \Big].
 \eeqnn 
 The fact that $N\big((0,t],(1,\infty)\big) <\infty$ a.s. and the inequality $(x+y)^{\alpha/2}\leq x^{\alpha/2}+ y^{\alpha/2}$ for all $x,y\geq 0$ induce that uniformly in $t\geq 0$, 
 \beqnn
 \mathbf{E}\big[ |Z_t|^\alpha \big]  
 \ar\leq\ar C \cdot  \mathbf E\Big[ \int_0^t\int_1^\infty  y^\alpha\,N(ds,dy) \Big]
 = C\cdot t\cdot \int_1^\infty  y^\alpha\,\nu(dy) 
 \leq C\cdot t.
 \eeqnn 
 Taking this back into \eqref{eqn.339} and then using  the inequality $b(t)\geq C\cdot t^{1/\alpha}$ for large $t\geq 0$,  
 \beqnn
 \mathbf{P}\big( |Z_t|\geq Kb(t)/2 \big)  \leq  \frac{C\cdot t}{|Kb(t)|^\alpha} 
 \leq \frac{C}{K^\alpha}. 
 \eeqnn 
 which goes to $0$ as $K\to\infty$. Here we have proved Lemma~\ref{Lemma.302}(1).
 \qed

 \begin{proposition} \label{Prop.306}
 	There exists a constant $t_0>0$ such that 
 	\beqnn
 	\sup_{t\geq t_0}\frac{t}{b(t)^2} +	\sup_{t\geq t_0} \, t \cdot \int_{-\infty}^1  \Big|\frac{y}{b(t)}\Big|^2 \wedge \Big|\frac{y}{b(t)} \Big|\, \nu(dy) <\infty. 
 	\eeqnn 
 \end{proposition}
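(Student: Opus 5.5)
The first term is handled directly by Condition~\ref{MainCondition01}(2): the first limit in \eqref{eqn.Con01} gives $c>0$ and $t_1$ with $b(t)\geq c\,t^{1/\alpha}$ for $t\geq t_1$. Hence
\[
\frac{t}{b(t)^2}\leq c^{-2}\, t^{1-2/\alpha},
\]
and this is bounded for $t\ge t_1$ because $\alpha\le 2$.

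For the integral term I split the integration region into $\{|y|\leq b(t)\}$ and $\{y<-b(t)\}$; the latter is the only other possibility, since $y\le 1<b(t)$ for large $t$.

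On $\{|y|\le b(t)\}$ the integrand equals $y^2/b(t)^2$, and we use $\int_{-\infty}^1 (|y|\wedge|y|^2)\,\nu(dy)<\infty$.
\begin{itemize}
\item For $|y|\le 1$, the contribution is at most $\frac{t}{b(t)^2}\int_{|y|\le1}y^2\,\nu(dy)$, which is bounded by the first part.
\item For $-b(t)\le y<-1$, the bound $y^2\le b(t)|y|$ gives a contribution of at most $\frac{t}{b(t)}\int_{y<-1}|y|\,\nu(dy)$.
\end{itemize}
On $\{y<-b(t)\}$ the integrand is $|y|/b(t)$, so again the contribution is at most $\frac{t}{b(t)}\int_{y<-1}|y|\,\nu(dy)$. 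The whole quantity is therefore reduced to controlling $t/b(t)$, and this is the main obstacle: since $b(t)=o(t)$, the ratio $t/b(t)$ tends to infinity. The crude first-moment bound is useless and the argument must be refined.

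The refinement is to show that
\[
t\int_{y<-1}\Big(\frac{y^2}{b(t)^2}\wedge\frac{|y|}{b(t)}\Big)\nu(dy)
\]
is bounded. For that I would use the third limit in \eqref{eqn.Con01}: $X_t/b(t)$ is tight. First I would reduce tightness of $X_t/b(t)$ to tightness of $M_t/b(t)$ via Lemma~\ref{Lemma.302}(1). Then I would apply the standard converse for infinitely divisible laws: if the family $M_t/b(t)$, $t\ge t_0$, is tight, then its Lévy measures $t\,\nu(b(t)\,d u)$ restricted to $u<1/b(t)$ satisfy
\[
\sup_t \int (u^2\wedge 1)\, t\,\nu(b(t)\,du)<\infty,
\]
and the Gaussian parts $t\sigma^2/b(t)^2$ are bounded.

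This does not quite give the $|u|$ behaviour for $|u|>1$. To upgrade from $1$ to $|u|$ on $\{u<-1\}$, I would use that $M_t$ is centered: the compensated mean of the large negative jumps must be balanced. If $t\int_{u<-1}|u|\,\nu(b(t)du)$ were unbounded, the centering term would shift $M_t/b(t)$ by an unbounded amount in the positive direction. Concretely, I would split $M_t=M_t^{(1)}+M_t^{(2)}$ with $M_t^{(2)}=\int_0^t\int_{y<-b(t)}y\,\widetilde N(ds,dy)$. The uncompensated jump part of $M_t^{(2)}$ is nonpositive, so $M_t^{(2)}\ge t\int_{y<-b(t)}|y|\,\nu(dy)$. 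Meanwhile $M_t^{(1)}$ has variance of order $b(t)^2$ by the truncated second-moment bound from the tightness step. Hence
\[
\mathbf P\big(M_t\geq \tfrac12 t\textstyle\int_{y<-b(t)}|y|\,\nu(dy)\big)
\]
stays bounded away from $0$, and tightness of $M_t/b(t)$ then forces $t\int_{y<-b(t)}|y|\,\nu(dy)=O(b(t))$.

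Making this contradiction argument rigorous and uniform in $t\ge t_0$ is the delicate step. Combining it with the truncated-second-moment bound gives the proposition.
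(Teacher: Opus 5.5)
Your overall route is the paper's. You bound $t/b(t)^2$ using $b(t)\ge c\,t^{1/\alpha}$. You then get $\sigma^2t/b(t)^2+t\int_{-\infty}^1 1\wedge (y/b(t))^2\,\nu(dy)\le C_0$ for $t\ge t_0$ from tightness of $M_t/b(t)$; you quote the compactness criterion for infinitely divisible laws, whereas the paper proves it by hand by integrating $1-\cos$ over $u\in[0,u_0]$, and either is fine. Finally you use the centering to control $D_t:=t\int_{y<-b(t)}|y|\,\nu(dy)$. The gap is in this last step. Write $M_t^{(2)}=J_t+D_t$ with $J_t:=\int_0^t\int_{y<-b(t)}y\,N(ds,dy)\le 0$. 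Then $M_t^{(2)}\le D_t$, not $\ge D_t$ as you claim. Indeed $\mathbf E[M_t^{(2)}]=0$: the positive drift $D_t$ is cancelled on average by the big negative jumps, so there is no pathwise lower bound. As written, nothing forces $\mathbf P(M_t\ge D_t/2)$ to stay away from zero.

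The missing idea is to work only on the event $\{J_t=0\}$ that no jump below $-b(t)$ occurs before time $t$. Its probability is uniformly bounded below: $\mathbf P(J_t=0)=\exp\{-t\nu((-\infty,-b(t)))\}\ge e^{-C_0}$. This holds because $1\wedge u^2=1$ for $|u|\ge 1$, so the bound from your compactness step already controls $t\nu((-\infty,-b(t)))$. On $\{J_t=0\}$ one has $M_t=M_t^{(1)}+D_t$, where $M_t^{(1)}$ is independent of $J_t$, centered, and has variance at most $C_0b(t)^2$. Hence $\mathbf P(M_t\ge D_t/2)\ge e^{-C_0}\,\mathbf P(M_t^{(1)}\ge -D_t/2)\ge e^{-C_0}/2$ as soon as $D_t\ge\sqrt{8C_0}\,b(t)$, by Chebyshev. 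Choose $K$ with $\sup_{t\ge t_0}\mathbf P(M_t\ge Kb(t))<e^{-C_0}/2$, using Lemma~\ref{Lemma.302}(1). This gives $D_t\le\max\{2K,\sqrt{8C_0}\}\,b(t)$ for all $t\ge t_0$, which is the uniform bound you left open. This is exactly the paper's argument, phrased there as $\mathbf P(Y_t\ge Kb(t))\le 1/4$ for $Y_t=M_t^{(1)}+D_t$, followed by Chebyshev applied to $Y_t$.
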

 \proof The assumptions $\alpha \in(1,2]$ and
 $\limsup_{s\to\infty}s/b(s)^\alpha<\infty$ directly yield that for some $t_0>0$,
 \beqlb\label{eqn.336}
 \sup_{t\geq t_0}\frac{t}{b(t)^2} <\infty. 
 \eeqlb
 To bound the second supremum, it suffices to find some constant $t_0>0$ such that  
 \beqlb\label{eqn.340}
 \sup_{t\geq t_0}\,  t \cdot \int_{-\infty}^1 1\wedge  \Big(\frac{y}{b(t)}\Big)^2  \nu(dy) <\infty
 \quad \mbox{and}\quad
 \sup_{t\geq t_0}\, t \cdot \int_{-\infty}^{-b(t)}  \frac{|y|}{b(t)} \nu(dy) <\infty. 
 \eeqlb
 
 By Lemma~\ref{Lemma.302}(1), we first choose $K$ and $t_0$ large enough such that 
 \beqnn
 \sup_{t\geq t_0}  \mathbf P\big( |M_t|\geq Kb(t) \big) \leq \frac{1}{8}.
 \eeqnn 
 By the inequality $|1-e^{\mathtt{i}z}|\leq |z|\wedge 2$ for all $z\in \mathbb{R}$, we have for all $t\geq t_0$ and $|u|\leq \frac{1}{4K}$, 
 \beqnn
 \bigg| 1- \mathbf{E}\Big[ \exp\Big\{ \mathtt{i}u \cdot \frac{M_t}{b(t)} \Big\} \Big] \bigg|
 \ar\leq\ar  \mathbf{E}\bigg[ \Big|1- \exp\Big\{ \mathtt{i}u \cdot \frac{M_t}{b(t)} \Big\} \Big|\cdot \mathbf{1}_{|M_t|\leq  Kb(t)} \bigg] \cr
 \ar\ar+ \mathbf{E}\bigg[ \Big|1- \exp\Big\{ \mathtt{i}u \cdot \frac{M_t}{b(t)} \Big\} \Big|\cdot \mathbf{1}_{|M_t|\geq  Kb(t)} \bigg]\cr
 \ar\leq\ar \mathbf{E}\bigg[ |u| \cdot \frac{|M_t|}{b(t)}  \cdot \mathbf{1}_{|M_t|\leq  Kb(t)} \bigg] 
 + 2\mathbf{P}\big( |M_t|\geq  Kb(t) \big) \leq \frac{1}{2},
 \eeqnn
 which, along with the triangle inequality, induces that 
 \beqnn
 \inf_{t\geq t_0}\inf_{|u|\leq \frac{1}{4K}} \bigg|  \mathbf{E}\Big[ \exp\Big\{ \mathtt{i}u \cdot \frac{M_t}{b(t)} \Big\} \Big] \bigg|\geq \frac{1}{2}. 
 \eeqnn
 Recall the characteristic function of $M$, i.e.,
 \beqnn
 \mathbf{E}\Big[ \exp\Big\{ \mathtt{i}u \cdot \frac{M_t}{b(t)} \Big\} \Big] 
 =  \exp\bigg\{- \frac{\sigma^2}{2} \cdot |u|^2\cdot \frac{ t}{|b(t)|^2} + t\cdot \int_{-\infty}^1 \Big( \exp\Big\{\mathtt{i}\frac{u}{b(t)}y\Big\} -1- \mathtt{i}\frac{u}{b(t)}y  \Big) \nu(dy)\bigg\},
 \eeqnn
 which follows that 
 \beqnn
 -\log \bigg|  \mathbf{E}\Big[ \exp\Big\{ \mathtt{i}u \cdot \frac{M_t}{b(t)} \Big\} \Big] \bigg|
 \ar=\ar  \frac{\sigma^2}{2} \cdot |u|^2\cdot \frac{ t}{|b(t)|^2} +t\cdot \int_{-\infty}^1 \Big(
 1- \cos\Big( \frac{u}{b(t)}\cdot y \Big)\Big)\nu(dy)\leq \ln 2,
 \eeqnn
 for all $t\geq t_0$ and $|u|\leq \frac{1}{4K}$. 
 Hence, for any $0<u_0\leq \frac{1}{4K}$, we have 
 \beqnn
 \frac{t}{u_0} \int_0^{u_0} du\int_{-\infty}^1 \Big(
 1- \cos\Big( \frac{u}{b(t)}\cdot y \Big)\Big)\nu(dy) \leq  \log2 ,
 \eeqnn
 which along with Fubini's theorem yields that 
 \beqnn
 t\cdot \int_{-\infty}^1  \Big(1 - \frac{b(t)}{u_0y}\sin\Big( \frac{u_0 y}{b(t)} \Big)\Big)\nu(dy)\leq  \ln 2 .
 \eeqnn
 Note that $1-\sin(z)/z \geq |z|^2/12$ for all $0<|z|\leq \pi$ and $1-\sin(z)/z\geq 2/3$ for all $|z|\geq \pi$, we have for some constant $C>0$,
 \beqnn
 1 - \frac{b(t)}{u_0y}\sin\Big( \frac{u_0 y}{b(t)}\Big)  \geq C\cdot \frac{\big(u_0y/b(t)\big)^2}{1+ \big(u_0y/b(t)\big)^2} \geq C\cdot \frac{|y/b(t)|^2}{1+|y/b(t)|^2},
 \eeqnn
 uniformly in $y\neq 0$ and $t\geq t_0$, which induces that 
 \beqnn
 \sup_{t\geq t_0}\, t\cdot \int_{-\infty}^1  \frac{|y/b(t)|^2}{1+|y/b(t)|^2}\, \nu(dy) \leq \frac{\ln 2}{C} .
 \eeqnn
 This along with the inequality $2\frac{|z|}{1+|z|} \geq |z|\wedge 1$ for all $z\in \mathbb{R}$ induces that
 \beqlb\label{eqn.342}
 \sup_{t\geq t_0} \, t \cdot \int_{-\infty}^1 1\wedge  \Big(\frac{y}{b(t)}\Big)^2  \nu(dy) \leq \frac{2\ln 2}{C} .
 \eeqlb
 
 We now start to prove the second inequality in \eqref{eqn.340}.  
 By \eqref{eqn.336} and \eqref{eqn.342}, we choose a large constant $C_0>0$ such that for all $t\geq t_0$,
 \beqnn
 \frac{\sigma^2\cdot t}{|b(t)|^2} +  t\cdot\int_{-b(t)}^1   \Big|\frac{y}{b(t)}\Big|^2   \,\nu(dy)+ t\cdot \nu\big((-\infty,-b(t)]\big)=\frac{\sigma^2\cdot t}{|b(t)|^2} +  t\cdot\int_{-\infty}^1 1\wedge \Big|\frac{y}{b(t)}\Big|^2 \,\nu(dy) \leq C_0. 
 \eeqnn 
 Moreover, by Lemma~\ref{Lemma.302}(1), we choose  $K$ and $t_0$ large enough again such that 
 \beqlb\label{eqn.343}
 \sup_{t\geq t_0} \mathbf{P}\Big( \frac{M_t}{b(t)}\geq K \Big) \leq \frac{e^{-C_0}}{4}. 
 \eeqlb
 The process $M$ can be decomposed into the following two parts:
 \beqnn 
 Y_t:= \sigma \cdot B_t + \int_0^t\int_{-b(t)}^1 y\,\widetilde{N}(ds,dy) - t\cdot \int_{-\infty}^{-b(t)} y\,\nu(dy) 
 \quad \mbox{and}\quad 
 J_t:= \int_0^t\int_{-\infty}^{-b(t)} y\,N(ds,dy)  ,
 \eeqnn
 which are independent because of the orthogonality of $N(ds,dy)$. 
 Notice that $J_t \leq 0$ a.s. and for all  $t\geq t_0$, 
 \beqnn
 \mathbf{P}\big( J_t=0 \big) =\mathbf{P}\big(N\big((0,t], (-\infty, -b(t)]\big)=0 \big) =\exp\big\{ - t\cdot \nu\big((-\infty,-b(t)]\big)\big\} \geq e^{-C_0},
 \eeqnn
 we have 
 \beqnn
 \mathbf{P}\Big( \frac{M_t}{b(t)}\geq K \Big)
 = \mathbf{P}\Big( \frac{Y_t+J_t}{b(t)}\geq K \Big)
 \geq \mathbf{P}\Big( \frac{Y_t}{b(t)}\geq K, J_t=0 \Big) = \mathbf{P}\Big( \frac{Y_t}{b(t)}\geq K \Big) \times \mathbf{P}\big(  J_t=0 \big) ,
 \eeqnn 
 which along with \eqref{eqn.343} yields that 
 \beqlb\label{eqn.344}
 \mathbf{P}\Big( \frac{Y_t}{b(t)}\geq K \Big)  \leq \frac{1}{4}. 
 \eeqlb
 On the other hand,  a simple calculation shows that
 \beqnn
 \mathbf{E}\Big[ \frac{Y_t}{b(t)} \Big] = t\cdot \int_{-\infty}^{-b(t)} \frac{|y|}{b(t)}\,\nu(dy)\geq 0 ,\quad 
 \mathrm{Var}\Big(\frac{Y_t}{b(t)} \Big) = \frac{\sigma^2\cdot t}{|b(t)|^2} + t\cdot\int_{-b(t)}^1 \Big|\frac{y}{b(t)}\Big|^2 \,\nu(dy)\leq C_0.
 \eeqnn
 If $ \mathbf{E}\big[ Y_t/b(t)  \big]\leq K $, we see that 
 \beqnn
 t\cdot \int_{-\infty}^{-b(t)} \frac{|y|}{b(t)}\,\nu(dy) \leq K.
 \eeqnn
 Otherwise, if $ \mathbf{E}\big[ Y_t/b(t)  \big]>K $, by  Chebyshev's inequality we have 
 \beqnn
 \mathbf{P}\Big( \frac{Y_t}{b(t)}\geq K \Big) 
 = 1- \mathbf{P}\Big( \frac{Y_t}{b(t)}< K \Big) 
 \ar=\ar 1- \mathbf{P}\Big( \frac{Y_t}{b(t)}-\mathbf{E}\Big[ \frac{Y_t}{b(t)} \Big] < K-\mathbf{E}\Big[ \frac{Y_t}{b(t)} \Big]  \Big) \cr
 \ar\geq\ar  1- \mathbf{P}\bigg( \Big|\frac{Y_t}{b(t)}-\mathbf{E}\Big[ \frac{Y_t}{b(t)} \Big]\Big| > \mathbf{E}\Big[ \frac{Y_t}{b(t)} \Big] -K  \bigg)\cr
 \ar\geq\ar 1- \frac{ \mathrm{Var}\big( Y_t/b(t) \big)}{\big| \mathbf{E}\big[ Y_t/b(t)  \big]-K \big|^2}\cr
 \ar\geq \ar  1- \frac{ C_0}{\big| \mathbf{E}\big[ Y_t/b(t)  \big]-K \big|^2},
 \eeqnn
 which along with \eqref{eqn.344} induces that for all $t\geq t_0$, 
 \beqnn
 \bigg|  \mathbf{E}\Big[ \frac{Y_t}{b(t)} \Big]-K \bigg|^2 \leq  4C_0 
 \quad \mbox{and then }\quad 
 0\leq   \mathbf{E}\Big[ \frac{Y_t}{b(t)} \Big] =t\cdot \int_{-\infty}^{-b(t)} \frac{|y|}{b(t)}\,\nu(dy) \leq  2\sqrt{C_0} +K.
 \eeqnn
 Here we have proved the second inequality in \eqref{eqn.340}. 
 \qed 
 
 \textbf{\textit{Proof of Lemma~\ref{Lemma.302}(2).}} Note that $M$  has  no positive jumps larger than $1$, the L\'evy-Khintchine formula gives 
 \beqnn
 \mathbf{E}\Big[ \exp\Big\{\frac{M_t}{b(t)}\Big\} \Big] 
 = \exp\bigg\{ \frac{\sigma^2}{2} \cdot  \frac{ t}{|b(t)|^2} + t\cdot \int_{-\infty}^1 \Big( \exp\Big\{\frac{y}{b(t)} \Big\} -1- \frac{y}{b(t)}  \Big) \nu(dy) \bigg\}.
 \eeqnn
 By using the inequality $0\leq e^{z}-1-z \leq  |z|^2\wedge |z|$ for all $z\leq 1$, then   \eqref{eqn.336} and Proposition~\ref{Prop.306},  
 \beqnn
 \sup_{t\geq t_0} \mathbf{E}\Big[ \exp\Big\{\frac{M_t}{b(t)}\Big\} \Big]  
 \leq \sup_{t\geq t_0}  \exp\bigg\{ \frac{\sigma^2}{2} \cdot  \frac{ t}{|b(t)|^2} + t\cdot \int_{-\infty}^1  \Big|\frac{y}{b(t)}  \Big|^2 \wedge \Big| \frac{y}{b(t)}  \Big|\, \nu(dy)\bigg\} <\infty.
 \eeqnn 
 \qed 
 
 \textbf{\textit{Proof of Lemma~\ref{Lemma.302}(3).}}    
 Since $M$  has zero mean and no jumps larger than $1$, we have 
 \beqnn
 \mathbf{E}\big[ \exp\big\{ \theta M_1\big\} \big] <\infty ,\quad \forall \, \theta \geq 0
 \quad \mbox{and also}\quad 
 \lim_{\theta \to 0+} \frac{1}{\theta}\cdot \log \mathbf{E}\big[ \exp\big\{ \theta M_1\big\} \big] =0,
 \eeqnn 
 which allows us to choose a constant $c_\eta>0$ sufficiently small such that
 \beqnn
 \mathbf E\Big[  \exp\big\{ 2c_\eta\cdot M_1 \big\} \Big]
 \leq
 \exp\big\{\eta c_\eta\big\}.
 \eeqnn
 By the  Markov inequality and the i.i.d. increments of $M$, we have
 for all $t>0$ and $y\geq \eta t$,
 \beqnn
 \mathbf P\big( M_t>y \big)
 \leq
 e^{-2c_\eta y} \cdot \mathbf E\big[ \exp\big\{ 2c_\eta M_t \big\} \big] 
 \ar\leq\ar
 e^{   c_\eta \eta t  -2c_\eta y  }
 \leq
 e^{- c_\eta y }.
 \eeqnn 
 \qed 
 
 \textbf{\textit{Proof of Lemma~\ref{Lemma.302}(4).}}    
 By the inequality $|e^{z}-1-z| \leq e^{z\vee 0}\cdot (|z|^2\wedge |z|)$ for all $z\in \mathbb{R}$,  there exists a constant $C>0$ such that for all $\lambda \geq 1$,
 \beqnn
 \log\mathbf E\Big[  \exp\big\{\lambda M_1 \big\}  \Big]
 \ar=\ar
 \frac{\sigma^2 \lambda^2}{2} + \int_{(-\infty,1]}  \big( e^{\lambda y}-1-\lambda y \big)\, \nu(dy) 
 \leq 
 C\cdot \lambda^2\cdot e^\lambda.
 \eeqnn
 Choose $\eta\in(0,\frac{1-\varepsilon}{2})$ and set
 $ \lambda=(1-\eta)\log(y/t)$. 
 Note that $\lambda \geq 1$ for all $y>T\cdot e^{1/(1-\eta)}$, we have as $y\to\infty$, 
 \beqnn
 \frac{ \mathbf P(M_t>y) }{t} \ar\leq\ar
 \frac{1}{t} \cdot \exp\big\{ -\lambda y + t\cdot \log\mathbf E\big[ \exp\big\{ \lambda M_1 \big\} \big] \big\} \cr
 \ar\leq\ar \exp\Big\{ -\log t - (1-\eta)y\log\frac{y}{t}  + Ct^\eta y^{1-\eta} \Big( \log\frac{y}{t} \Big)^2 \Big\},
 \eeqnn
 which can be bounded by $ e^{-\frac{1+\varepsilon}{2} \cdot y\log y}=o(e^{- \varepsilon \cdot y\log y})$ uniformly in $t\in(0,T]$, since $ \sup_{t\in(0,T]} t^\eta\big(1+|\log t|\big)^2<\infty$ and $\sup_{t\in(0,T]} \big( (1-\eta)y-1  \big)\log t \leq  C_T\cdot y$ for all large $y$. 
 \qed

 \subsection{Proof of Lemma~\ref{Lemma.301}}
 
 Recall our assumption that $\overline{\nu}(1)=1$. 
 Consider a sequence $\{Y_j\}_{j\geq 1}$ of i.i.d. positive random variables with common distribution $ \nu_1(dy) := \mathbf 1_{\{y>1\}}\cdot \nu(dy)$. 
 In view of our assumption, we have 
 \beqnn
 m:=\mathbf{E}[Y_1] <\int_1^\infty y^\alpha \nu(dy)<\infty 
 \quad\mbox{and}\quad  
 \mathbf{E}\big[ |Y_1|^\alpha \big]<\infty. 
 \eeqnn 
 Let $S=\{S_n\}_{n\geq0}$ and $\widetilde{S}=\{\widetilde{S}_n\}_{n\geq0}$ be two random walks defined by $S_0=\widetilde{S}_0=0$,
 \beqnn
 S_n:= \sum_{i=1}^n Y_i
 \quad \mbox{and}\quad 
 \widetilde{S}_n:= \sum_{i=1}^n \big(Y_i -m\big).
 \eeqnn
 Let $N=\{ N_t:t\geq 0 \}$ be a Poisson process with rate $1$. The process $Z$ has the following realization:
 \beqlb\label{eqn.3001}
 Z_t\overset{\rm d} = S_{N_t}-m\cdot t = \sum_{j=1}^{N_t} Y_j - m\cdot t  .
 \eeqlb

 \begin{proposition}\label{Prop.3041}
 	There exist two constants $C,t_0>0$ such that for all $t\geq t_0$ and $K\geq 1$,
 	\beqnn
 	\sup_{0\leq s\leq t}\mathbf{P}\big(  |N_s-   s | \geq Kb(t)\big) + 	\sup_{0< s\leq t} \mathbf{E}\Big[\frac{N_s}{s}\cdot \mathbf{1}_{\{ |N_s- s| \geq Kb(t)\}}\Big]\leq \frac{C}{K} .
 	\eeqnn
 \end{proposition}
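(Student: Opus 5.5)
Both terms will be bounded by second-moment (Chebyshev/Cauchy--Schwarz) estimates for the Poisson variable $N_s$, which has mean and variance $s$. The only input about $b$ is \eqref{eqn.336} from Proposition~\ref{Prop.306}: there is a $t_0$ with $t\le C\, b(t)^2$ for all $t\ge t_0$. We fix such a $t_0$, take $t\ge t_0$, $K\ge 1$ and $0<s\le t$ (the value $s=0$ in the first supremum is trivial).

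For the first term, Chebyshev's inequality gives
\beqnn
\mathbf P\big(|N_s-s|\ge Kb(t)\big)\le \frac{s}{K^2b(t)^2}\le \frac{t}{K^2 b(t)^2}\le \frac{C}{K^2}\le \frac{C}{K},
\eeqnn
uniformly in $s\in(0,t]$, using $K\ge1$.

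For the second term we write $N_s/s=1+(N_s-s)/s$ and split
\beqnn
\mathbf{E}\Big[\frac{N_s}{s}\mathbf 1_{\{|N_s-s|\ge Kb(t)\}}\Big]\le \mathbf P\big(|N_s-s|\ge Kb(t)\big)+\frac1s\,\mathbf E\big[|N_s-s|\,\mathbf 1_{\{|N_s-s|\ge Kb(t)\}}\big].
\eeqnn
The first piece is handled as above. For the second piece, the trivial bound $|N_s-s|^2/(Kb(t))$ on the event is too weak: it gives $\mathrm{Var}(N_s)/(sKb(t))=1/(Kb(t))$, which is fine, but the naive alternative $\mathbf E|N_s-s|/s\sim s^{-1/2}$ blows up as $s\to0$. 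So we use instead, on $\{|N_s-s|\ge Kb(t)\}$,
\beqnn
|N_s-s|\le \frac{|N_s-s|^2}{Kb(t)},
\qquad\text{hence}\qquad
\frac1s\mathbf E\big[|N_s-s|\mathbf 1_{\{\cdot\}}\big]\le \frac{\mathrm{Var}(N_s)}{s\,Kb(t)}=\frac{1}{Kb(t)}\le \frac1K,
\eeqnn
where the last step uses the normalization $b\ge b(0)=1$. This holds uniformly in $s\in(0,t]$, including small $s$.

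Summing the three bounds yields the claim with a constant $C$ depending only on $\sup_{t\ge t_0}t/b(t)^2$. The only delicate point is the behaviour as $s\to0$ in the second supremum, where a factor $1/s$ appears; this is exactly what the quadratic bound on the event removes, since $\mathrm{Var}(N_s)/s=1$.
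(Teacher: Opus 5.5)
Your proof is correct and follows the paper's argument essentially line by line. You use Chebyshev with $\mathrm{Var}(N_s)=s\le t\le C\,b(t)^2$ (from \eqref{eqn.336}) for the probability, and the split $N_s\le s+|N_s-s|$ with $|N_s-s|\mathbf 1_{\{|N_s-s|\ge Kb(t)\}}\le |N_s-s|^2/(Kb(t))$ for the expectation, which gives $1/(Kb(t))\le 1/K$; your remark calling this quadratic bound ``too weak'' right before using it contradicts itself and should simply be deleted.
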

 \proof  By Proposition~\ref{Prop.306} and  Chebyshev's inequality, we first have 
 \beqnn
 \sup_{0\leq s\leq t}\mathbf{P}\big(  |N_s-   s | \geq Kb(t)\big) \leq \sup_{0\leq s\leq t} \frac{\mathbf{E}\big[ |N_s-   s |^2\big]}{ |Kb(t)|^2} = \frac{ t}{|Kb(t)|^2} \leq \frac{C}{K^2},
 \eeqnn
 for some constant $C>0$ independent of $t$. 
 Moreover, by the inequality $N_s\leq s+ |N_s-s| $ and  the Markov inequality, we also have for $t \geq t_0$,
 \beqnn
 \sup_{0< s\leq t} \mathbf{E}\Big[\frac{N_s}{s}\cdot \mathbf{1}_{\{ |N_s- s| \geq Kb(t)\}}\Big] 
 \ar\leq\ar  \sup_{0< s\leq t} \mathbf{P}\big(  |N_s- s| \geq Kb(t) \big) \cr
 \ar\ar + \sup_{0<s\leq t} \mathbf{E}\Big[\frac{|N_s-s|}{s} \cdot \mathbf{1}_{\{ |N_s- s| \geq Kb(t)\}}\Big]  \cr
 \ar\leq\ar  \frac{C}{K^2} + \sup_{0< s\leq t} \frac{\mathbf{E}\big[ |N_s-   s |^2\big]}{ s\cdot Kb(t) }\cr
 \ar =\ar  \frac{C}{K^2} +  \frac{1}{Kb(t) },
 \eeqnn
 and the desired upper bound holds since the function $b$ is eventually non-decreasing. 
 \qed 
 
 \begin{proposition}\label{Prop.304}
 	For any $\delta_1\geq \delta_0>0$, $K\geq 0$ and $n\geq1$, we have as $x\to\infty$,
 	\beqlb\label{eqn.333}
 	\sup_{|y|\leq Kb(x)}\sup_{\delta\in[\delta_0,\delta_1]} \bigg|\frac{  \mathbf{P}\big(  S_n \in x-y+\Delta_\delta \big) }{n\cdot  \nu\big(x+\Delta_\delta\big)} -1\bigg| \to 0.
 	\eeqlb
 	Moreover,  for any $\rho >0$, there exist two constants $C,x_0>0$ such that for all $n\geq 1$, $x\geq x_0$ and $\delta \in[\delta_0,\delta_1]$,
 	\beqlb\label{eqn.334}
 	\sup_{|y|\leq Kb(x)} \mathbf{P}\Big(  S_n   \in x-y+\Delta_\delta \Big) \leq C \cdot (1+\rho)^n \cdot  \nu\big(x+\Delta_\delta\big).
 	\eeqlb
 \end{proposition}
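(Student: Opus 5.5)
The plan is to reduce both assertions to the local subexponentiality of the probability distribution $\nu_1$ of $Y_1$, and then to absorb the shift $y$ using Proposition~\ref{Prop.205}.

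Since $\overline{\nu}(1)=1$, $\nu_1$ is a probability measure on $(1,\infty)$, and $\mathbf{P}(Y_1\in x+\Delta_\delta)=\nu(x+\Delta_\delta)$ for $x\geq 1$. Condition~\ref{MainCondition01}(3), together with Proposition~\ref{Prop.205} (which gives $\nu(x+z+\Delta_\delta)\sim\nu(x+\Delta_\delta)$ uniformly in bounded $z$ and in $\delta\in[\delta_0,\delta_1]$), says that $\nu_1$ is long-tailed in the local sense. The first goal is to show that it is in fact $\Delta_\delta$-subexponential, i.e.
\[
\mathbf{P}\big(Y_1+Y_2\in x+\Delta_\delta\big)\sim 2\,\nu\big(x+\Delta_\delta\big),
\]
uniformly in $\delta\in[\delta_0,\delta_1]$. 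I would split the event according to which summand exceeds $x/2$. By symmetry it suffices to bound
\[
2\int_1^{x/2}\nu\big(x-z+\Delta_\delta\big)\,\nu_1(dz).
\]
\begin{itemize}
\item On $z\leq A$, Proposition~\ref{Prop.205} gives $\nu(x-z+\Delta_\delta)/\nu(x+\Delta_\delta)\to 1$ uniformly, so this part contributes $\sim\nu_1((1,A])\,\nu(x+\Delta_\delta)$.
\item On $A<z\leq x/2$, I would use the two alternatives in Condition~(3). If $x^\alpha\nu(x+\Delta_\delta)\in\mathcal{OR}$ together with $\mathcal{AD}$, then $\nu(x-z+\Delta_\delta)\leq C\,\nu(x+\Delta_\delta)$ for $z\leq x/2$, so the tail is bounded by $C\,\nu_1((A,\infty))$, which is small. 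If $x^\alpha\nu(x+\Delta_\delta)\in\mathcal{S}d$, I would write $g(x)=x^\alpha\nu(x+\Delta_\delta)$ and bound the convolution-type integral by $\int_A^{x/2} g(x-z)\,z^{-\alpha}\,\nu_1(dz)$. Discretising $\nu_1(dz)\approx\nu(z+\Delta_\delta)/\delta$ on unit blocks, this becomes $\int_A^{x/2}g(x-z)g(z)\,dz$, which is $o(g(x))$ as $A\to\infty$ by the definition of $\mathcal{S}d$.
\end{itemize}
Uniformity in $\delta$ would come from the partition argument of Proposition~\ref{Prop.205}. This is the classical Asmussen--Foss--Korshunov route, following Lemmas~2--3 of \cite{AsmussenFossKorshunov2003} and the argument behind Corollary~2.1 of \cite{DenisovDiekerShneer2008}.

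Given subexponentiality, \eqref{eqn.333} with $y=0$ follows by induction on $n$. One conditions on $S_{n-1}$, splits into $S_{n-1}\leq A$, $Y_n\leq A$, and both large, and uses local long-tailedness for the first two pieces and the subexponential estimate for the third. To pass to $|y|\leq Kb(x)$, apply the $y=0$ result at the point $x-y$ and then use \eqref{eqn.202}: since $x-y\sim x$, we get
\[
\mathbf{P}\big(S_n\in x-y+\Delta_\delta\big)\sim n\,\nu\big(x-y+\Delta_\delta\big)\sim n\,\nu\big(x+\Delta_\delta\big).
\]
The only care needed is that the convergence at level $x$ must be uniform over a window of size $Kb(x)=o(x)$; this holds because all estimates above depend on $x$ only through $\nu(x+\Delta_\delta)$ and Proposition~\ref{Prop.205}.

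For \eqref{eqn.334} I would use the local analogue of Kesten's bound. Set
\[
a_n:=\sup_{x\geq x_0}\sup_{\delta\in[\delta_0,\delta_1]}\frac{\mathbf{P}\big(S_n\in x+\Delta_\delta\big)}{\nu\big(x+\Delta_\delta\big)}.
\]
Conditioning on $Y_n$, split into $Y_n\leq x-x_0$ and its complement, and use the subexponential estimate with $x_0$ large so that the ratio of the convolution to $\nu$ is within $1+\rho$. This gives the recursion $a_n\leq(1+\rho)a_{n-1}+C$, with the second term controlled by \eqref{eqn.2021}, and therefore $a_n\leq C'(1+\rho)^n$. The shift by $y$ is then handled by \eqref{eqn.2021} and \eqref{eqn.202}, since for $|y|\leq Kb(x)$ the ratio $\nu(x-y+\Delta_\delta)/\nu(x+\Delta_\delta)$ is bounded.

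I expect the main obstacle to be the $\mathcal{S}d$ case of the subexponentiality step. There one must carefully turn the Lebesgue-convolution property of $g=x^\alpha\nu(x+\Delta_\delta)$ into a bound on the measure convolution $\nu_1*\nu_1$, with constants uniform in $\delta\in[\delta_0,\delta_1]$, using the almost-decrease property to compare $\nu_1$ on unit blocks with $g$.
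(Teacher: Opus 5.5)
\paragraph{Comparison with the paper.} Your route is the paper's. The paper also reduces to $K=0$ via Proposition~\ref{Prop.205}. It gets the fixed-$\delta$ asymptotics and the Kesten-type bound from local subexponentiality of $\nu_1$, quoting Lemmas~6.1--6.2 of \cite{DenisovDiekerShneer2008} for the $\mathcal{S}d$/$\mathcal{OR}$ cases and Corollary~2 and Proposition~4 of \cite{AsmussenFossKorshunov2003}. It then obtains uniformity in $\delta\in[\delta_0,\delta_1]$ by the same partition argument. The difference is that you propose to reprove the quoted results.

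\paragraph{The step that fails.} In the $\mathcal{S}d$ case you bound $\nu(x-z+\Delta_\delta)=(x-z)^{-\alpha}g(x-z)\le z^{-\alpha}g(x-z)$, and this discards the factor $x^{-\alpha}$. Already the part of $\int_A^{x/2}g(x-z)z^{-\alpha}\nu_1(dz)$ coming from $z$ near $A$ is comparable to $g(x)$, since $g(x-z)\sim g(x)$ there. So your chain can only give $o(g(x))$, whereas you need $o(\nu(x+\Delta_\delta))=o(x^{-\alpha}g(x))$. The fix is as follows.
\begin{itemize}
\item Bound $(x-z)^{-\alpha}\le 2^{\alpha}x^{-\alpha}$ for $z\le x/2$. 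This reduces the claim to $\int_A^{x/2}g(x-z)\,\nu_1(dz)=o(g(x))$.
\item That estimate then follows from your block discretisation (via $\mathcal{AD}$ and local long-tailedness), together with $\nu(z+\Delta_\delta)=z^{-\alpha}g(z)\le g(z)$ for $z\ge 1$.
\item Finally use $\int_A^{x-A}g(x-z)g(z)\,dz=o(g(x))$, which follows from $g\in\mathcal{S}d$ (Fatou on $[0,A]$ and $[x-A,x]$).
\end{itemize}

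\paragraph{Smaller points.}
\begin{itemize}
\item Since $\delta>0$, both summands can lie in $(x/2,x/2+\delta]$. Split according to $\min(Y_1,Y_2)\le (x+\delta)/2$ rather than appealing to symmetry at $x/2$.
\item In the Kesten recursion, the quantity that must be at most $(1+\rho)\,\nu(x+\Delta_\delta)$ is the restricted convolution $\mathbf{P}(Y_1+Y_2\in x+\Delta_\delta,\,Y_2\le x-x_0)$. Its ratio to $\nu(x+\Delta_\delta)$ has $\limsup$ at most $1+\nu_1((x_0,\infty))$; the unrestricted ratio tends to $2$.
\item That bound is only available for $x\ge x_1(x_0)$. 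The range $[x_0,x_1]$ goes into the additive constant, because $\nu(\cdot+\Delta_\delta)$ is bounded away from $0$ there (e.g.\ by Proposition~\ref{Prop.202}).
\item The complementary piece $\nu((x-x_0,x+\delta])$ is controlled by local long-tailedness \eqref{eqn.202}, not by \eqref{eqn.2021}, which only bounds $\nu$ at larger arguments.
\end{itemize}
With these corrections your argument is a valid self-contained version of the paper's proof.
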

 \proof 
 Here we just need to prove these results with $K=0$.
 For the case of $K>0$, they follow directly from Proposition~\ref{Prop.205}.  
 For any $\delta>0$, recall that $x^{\alpha}\cdot \nu_1\big(x+\Delta_\delta\big)$ belongs to either $\mathcal{OR}$ or $\mathcal{S}d$.  Lemma 6.1 and 6.2 in \cite{DenisovDiekerShneer2008} shows that  $\{  n^{1/\alpha}\}_{n\geq 1}$ is a truncation sequence for $\nu_1(dy)$.  
 Additionally, since $\mathbf{E}[Y_1] <\infty$, then $\{ n \}_{n\geq 1}$ is a natural-scale sequence. 
 For any $\delta >0$, by Corollary~2 and Proposition~4 in \cite{AsmussenFossKorshunov2003} we have as $x\to\infty$, 
 \beqlb\label{eqn.3331}
 \bigg|\frac{  \mathbf{P}\big(  S_n \in x+\Delta_\delta \big) }{n\cdot  \nu\big(x+\Delta_\delta\big)} -1\bigg| \to 0 
 \eeqlb
 and for some constant $x_0>0$,
 \beqlb \label{eqn.3332}
 \mathbf{P}\Big(  S_n   \in x +\Delta_\delta \Big) \leq C \cdot (1+\rho)^n \cdot  \nu\big(x+\Delta_\delta\big),
 \eeqlb
 uniformly in $n\geq 1$, $x\geq x_0$.  
 For any $\epsilon \in(0,1)$, recall the finite partition $\delta_0= r_0<r_1<\cdots r_{k_\epsilon}= \delta_1$ defined in the proof of Proposition~\ref{Prop.205}, whose mesh size is at most $\epsilon$.   
 For any $k=0,\cdots, k_\epsilon-1$ and any $\delta\in [r_k,r_{k+1}]$, we have 
 \beqnn
 \frac{  \mathbf{P}\big( S_n\in x+\Delta_{r_{k}} \big) }{n\cdot  \nu\big(x+\Delta_{r_{k+1}}\big)}
 \leq \frac{  \mathbf{P}\big( S_n \in x+\Delta_\delta \big) }{n\cdot  \nu\big(x+\Delta_\delta\big)}
 \leq 	 \frac{  \mathbf{P}\big( S_n \in x+\Delta_{r_{k+1}}\big) }{n\cdot  \nu\big(x+\Delta_{r_{k}}\big)}. 
 \eeqnn
 which follows that 
 \beqnn
 \inf_{k=0,\cdots, k_{\epsilon}-1}    \frac{  \mathbf{P}\big( S_n \in x+\Delta_{r_{k}} \big) }{n\cdot  \nu\big(x+\Delta_{r_{k+1}}\big)}
 \ar\leq\ar \inf_{\delta\in[\delta_0,\delta_1]} \frac{  \mathbf{P}\big( S_n\in x+\Delta_\delta \big) }{n\cdot  \nu\big(x+\Delta_\delta\big)}\cr
 \ar\leq\ar \sup_{\delta\in[\delta_0,\delta_1]} \frac{  \mathbf{P}\big( S_n\in x+\Delta_\delta \big) }{n\cdot  \nu\big(x+\Delta_\delta\big)}
 \leq  \sup_{k=0,\cdots, k_{\epsilon}-1}  \frac{  \mathbf{P}\big( S_n \in x+\Delta_{r_{k+1}}\big) }{n\cdot  \nu\big(x+\Delta_{r_{k}}\big)}. 
 \eeqnn
 Since $k_\epsilon<\infty$, the uniform upper bound \eqref{eqn.334} with $K=0$ follows directly from \eqref{eqn.3332}  and Proposition~\ref{Prop.207}. 
 Moreover, armed with \eqref{eqn.3331} and  Proposition~\ref{Prop.207} we pass the first and the last terms to their corresponding limits,
 \beqnn
 \lim_{x\to\infty}\inf_{k=0,\cdots, k_{\epsilon}-1}    \frac{  \mathbf{P}\big( S_n \in x+\Delta_{r_{k}} \big) }{n\cdot  \nu\big(x+\Delta_{r_{k+1}}\big)}
 \ar=\ar \inf_{k=0,\cdots, k_{\epsilon}-1}   \frac{r_k}{r_{k+1}}  \geq \frac{\delta_0}{\delta_0+\epsilon}
 \eeqnn
 and 
 \beqnn
 \lim_{x\to\infty}\sup_{k=0,\cdots, k_{\epsilon}-1}  \frac{  \mathbf{P}\big( S_n \in x+\Delta_{r_{k+1}}\big) }{n\cdot  \nu\big(x+\Delta_{{r_{k}}}\big)} = \sup_{k=0,\cdots, k_{\epsilon}-1}   \frac{r_{k+1}}{r_k}  \leq 1+ \frac{\epsilon}{\delta_0}. 
 \eeqnn
 By the squeeze theorem, we have  
 \beqnn
 \frac{\delta_0}{\delta_0+\epsilon}  \ar\leq\ar \liminf_{x\to\infty} \inf_{\delta\in[\delta_0,\delta_1]}\frac{  \mathbf{P}\big( S_n \in x+\Delta_\delta \big) }{n\cdot  \nu\big(x+\Delta_\delta\big)}
 \leq  \limsup_{x\to\infty} \sup_{\delta\in[\delta_0,\delta_1]}\frac{  \mathbf{P}\big( S_n \in x+\Delta_\delta \big) }{n\cdot  \nu\big(x+\Delta_\delta\big)} \leq 1+ \frac{\epsilon}{\delta_0},
 \eeqnn
 and the desired limit \eqref{eqn.333} follows immediately as $\epsilon \to 0+$. 
 \qed 
 
 \begin{proposition}\label{Prop.305}
 	For any $ \theta>0$, $K\geq 0$ and $\delta_1\geq \delta_0>0$, we have as $n\to\infty$,
 	\beqlb\label{eqn.3261}
 	\sup_{x\geq \theta n} \sup_{|y|\leq Kb(x)}\sup_{\delta\in[\delta_0,\delta_1]}
 	\bigg| \frac{  \mathbf P\big(  \widetilde{S}_n \in x-y+\Delta_\delta\big)}{n\cdot \nu\big(x+\Delta_\delta\big)}-1
 	\bigg| \to 0.
 	\eeqlb
 	Moreover, there exists a constant $C>0$ such  that for all $n\geq 1$, $x\geq \theta n$ and $\delta \in [\delta_0,\delta_1]$, 
 	\beqlb\label{eqn.3262}
 	\sup_{|y|\leq Kb(x)} \mathbf P\big(  \widetilde{S}_n \in x-y+\Delta_\delta\big) \leq C\cdot n\cdot \nu\big(x+\Delta_\delta\big). 
 	\eeqlb
 	
 \end{proposition}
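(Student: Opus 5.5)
Since $\widetilde S_n = S_n - mn$, the event $\{\widetilde S_n\in x-y+\Delta_\delta\}$ equals $\{S_n \in x+mn - y+\Delta_\delta\}$. Proposition~\ref{Prop.304} gives local asymptotics only for fixed $n$, so we cannot simply substitute. Instead we follow the one-big-jump scheme of \cite{DenisovDiekerShneer2008}, i.e.\ their Corollary~2.1 applied to the centered walk $\widetilde S$. That walk has increments $Y_1-m$ with $\mathbf E|Y_1-m|^\alpha<\infty$, so the two-sided moment assumption holds. Take $n^{1/\alpha}$ as the scale. By Lemmas~6.1--6.2 of \cite{DenisovDiekerShneer2008} it is a truncation sequence, and the positive tail of $Y_1-m$ is just $\nu$ shifted by the constant $m$.

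The insensitivity needed there is with respect to shifts of order $x^{1/\alpha}$. Since $x^{1/\alpha}\le Cb(x)$ by \eqref{eqn.Con01}, this follows from Proposition~\ref{Prop.205}, which also absorbs the constant shift $m$. The remaining hypothesis, $x^\alpha\nu(x+\Delta_\delta)\in\mathcal Sd\cup\mathcal{OR}$ together with the $\mathcal{AD}$ property, is part of Condition~\ref{MainCondition01}(3). Their corollary then yields, for each fixed $\delta$,
\[
\sup_{x\ge\theta n}\Big|\frac{\mathbf P(\widetilde S_n\in x+\Delta_\delta)}{n\,\nu(x+\Delta_\delta)}-1\Big|\to0 .
\]

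The plan then runs in four steps.

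\textbf{(i) Removing the shift $y$.} Any $|y|\le Kb(x)$ leaves $x-y\ge x/2\ge \theta n/2$ for large $x$, because $b(x)=o(x)$. By Proposition~\ref{Prop.204}, $b(x)\le C b(x-y)$. So it suffices to prove the $K=0$ statement with $\theta$ replaced by $\theta/2$, and then convert the denominator $\nu(x-y+\Delta_\delta)$ back to $\nu(x+\Delta_\delta)$ via \eqref{eqn.202}. This is the same triangle-inequality reduction used in Step~1 of the proof of Theorem~\ref{Main.Thm}.

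\textbf{(ii) Uniformity in $\delta\in[\delta_0,\delta_1]$.} We use the partition $\delta_0=r_0<\dots<r_{k_\epsilon}=\delta_1$ with monotone sandwiching, exactly as in Propositions~\ref{Prop.205} and \ref{Prop.304}. Applying Proposition~\ref{Prop.207} to the finitely many endpoints makes the ratios $r_{k+1}/r_k$ arbitrarily close to $1$.

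\textbf{(iii) The bound \eqref{eqn.3262}.} For large $n$ it follows from \eqref{eqn.3261}. For each of the finitely many small $n$, it follows from \eqref{eqn.334} with $\rho=1$, after writing $\widetilde S_n\in x-y+\Delta_\delta$ as $S_n\in x+mn-y+\Delta_\delta$ and using the bound \eqref{eqn.2021} to compare $\nu(x+mn+\Delta_\delta)$ with $\nu(x+\Delta_\delta)$. Small $x$ cannot occur here, since $x\ge\theta n$ and we may treat bounded $x$ using the positivity of $\nu$ from Proposition~\ref{Prop.202}.

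\textbf{(iv) Main obstacle.} The hard part is verifying that Corollary~2.1 of \cite{DenisovDiekerShneer2008} applies verbatim. In particular its insensitivity condition is phrased for the increment distribution $\mathbf P(Y_1-m\in x+\Delta_\delta)=\nu(x+m+\Delta_\delta)$, and its subexponential/$\mathcal{OR}$ membership condition is phrased in terms of that same distribution. Both transfer through the fixed shift $m$ by \eqref{eqn.Con02} and by shift-invariance of these classes.

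If that verification fails, the fallback is to redo their argument directly. Split on whether $\max_i Y_i> x/2$ or not. The no-big-jump part is controlled by truncation at $x^{1/\alpha}$-type levels with exponential bounds. The one-big-jump part gives $n\int \mathbf P(\widetilde S_{n-1}\in dz)\,\nu(x+m-z+\Delta_\delta)$. On $|z|\le Rn^{1/\alpha}$ the integrand is $\sim\nu(x+\Delta_\delta)$ by Proposition~\ref{Prop.205}. The tails are handled via Proposition~\ref{Prop.203}, which gives $\nu(x-z+\Delta_\delta)/\nu(x+\Delta_\delta)\le e^{\eta(z/b(x)+1)}$, combined with moment bounds on $\widetilde S_{n-1}/n^{1/\alpha}$.
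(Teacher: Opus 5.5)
Your proposal is correct and follows the paper's proof essentially step for step: reduce to $K=0$ via Proposition~\ref{Prop.205}, invoke Corollary~2.1 of \cite{DenisovDiekerShneer2008} for each fixed $\delta$, sandwich over a finite partition of $[\delta_0,\delta_1]$ using Proposition~\ref{Prop.207}, and obtain \eqref{eqn.3262} from \eqref{eqn.3261} for large $n$ and from \eqref{eqn.334} for the finitely many small $n$. Your small-$n$ argument (rewriting the event as $S_n\in x+mn-y+\Delta_\delta$ and comparing levels via \eqref{eqn.2021}) is more explicit than the paper's, though Proposition~\ref{Prop.202} only gives positivity of $\nu(x+\Delta_\delta)$ for $x\ge x_0$, so it does not handle bounded $x$ --- a point the paper also leaves implicit.
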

 \proof Similarly as in the proof of Proposition~\ref{Prop.304}, we just need to prove them with $K=0$. 
 Since $\mathbf{E}\big[ |Y_1-m|^\alpha \big]<\infty$,  Corollary~2.1 in  \cite{DenisovDiekerShneer2008} along with Proposition~\ref{Prop.205} tells that for any $\theta>0$ and  $\delta>0$,
 \beqlb\label{eqn3.2}
 \lim_{n\to\infty}
 \sup_{x\geq \theta n}
 \bigg|
 \frac{ \mathbf P\big(  \widetilde{S}_n \in x+\Delta_\delta\big) }{ n \cdot \nu\big(x+\Delta_\delta\big) } -1 \bigg|
 =0.
 \eeqlb
 We now prove that this limit also holds uniformly	 in  $\delta\in[\delta_0,\delta_1]$.
 For  any $\epsilon\in(0,1)$, we recall the partition $\delta_0=r_0<\cdots<r_{k_\epsilon}=\delta_1$, have that for any $k=0,\cdots ,k_\epsilon-1$ and $\delta \in [r_k,r_{k+1}]$, 
 \beqnn
 \frac{ \mathbf P\big(  \widetilde{S}_n \in x+\Delta_{r_k}\big) }{ n \cdot \nu\big(x+\Delta_{r_{k+1}}\big) } \leq  \frac{ \mathbf P\big(  \widetilde{S}_n \in x+\Delta_\delta\big) }{ n \cdot \nu\big(x+\Delta_\delta\big) } 
 \leq  \frac{ \mathbf P\big(  \widetilde{S}_n \in x+\Delta_{r_{k+1}}\big) }{ n \cdot \nu\big(x+\Delta_{r_{k}}\big) }. 
 \eeqnn
 By \eqref{eqn3.2} and  Proposition~\ref{Prop.207}, we obtain
 \beqnn
 \frac{1}{1+\epsilon/\delta_0}
 \ar\leq\ar
 \liminf_{n\to\infty}\inf_{x\geq \theta n}\inf_{\delta\in [\delta_0,\delta_1]}
 \frac{ \mathbf P\big(  \widetilde{S}_n \in x+\Delta_\delta\big) }{ n \cdot \nu\big(x+\Delta_\delta\big) } 
 \leq
 \limsup_{n\to\infty}
 \sup_{x\geq \theta n}\sup_{\delta\in [\delta_0,\delta_1]}
 \frac{ \mathbf P\big(  \widetilde{S}_n \in x+\Delta_\delta\big) }{ n \cdot \nu\big(x+\Delta_\delta\big) } 
 \leq
 1+\frac{\epsilon}{\delta_0},
 \eeqnn
 and the desired limit \eqref{eqn.3261} with $K=0$ follows as  $\epsilon\to 0+$. 
 For the uniform upper bound \eqref{eqn.3262}, the limit \eqref{eqn.3261} tells that for some large $N$,  
 \beqnn
 \sup_{n\geq N}   \sup_{x\geq \theta n}\sup_{\delta\in [\delta_0,\delta_1]}
 \frac{ \mathbf P\big( \widetilde{S}_n \in x+\Delta_\delta\big) }{ n \cdot \nu\big(x+\Delta_\delta\big) } 
 \leq2.
 \eeqnn
 Combining this with the second claim in Proposition~\ref{Prop.304} induces the uniform upper bound \eqref{eqn.3262} immediately.
 \qed  
 
 \textbf{\textit{Proof of Lemma~\ref{Lemma.301}.}} 
 Without loss of generality, we may assume $\theta \in(0,1]$, $\delta_0=1$ and $\delta_1=2$. The general case can be proved in the same way. 
 Here we just need to prove the desired limit \eqref{eqn.301} with $K=0$, i.e., 
 \beqlb\label{eqn.325}
 \lim_{t\to\infty}\sup_{0<s\leq t} \sup_{x\geq  \theta t}   \sup_{\delta\in[1,2]} 
 \bigg| 	\frac{ \mathbf P\big( Z_s\in x+\Delta_\delta \big)}{s\cdot \nu\big(x+\Delta_\delta\big)} -1 \bigg| = 0.
 \eeqlb 
 In the case of $K>0$, by the triangle inequality and the assumption $b(x)=o(x)$ as $x\to\infty$,  we have for large $t$, 
 \beqnn
 \lefteqn{\sup_{0<s\leq t} \sup_{x\geq  \theta t} 
 	\sup_{|y|\leq Kb(x)}  \sup_{\delta\in[1,2]} 
 	\bigg| 	\frac{ \mathbf P\big( Z_s\in x-y+\Delta_\delta \big)}{s\cdot \nu\big(x+\Delta_\delta\big)} -1 \bigg| }\ar\ar\cr
 \ar\leq \ar  \sup_{0<s\leq t} \sup_{z\geq  \theta t/2}  \sup_{\delta\in[1,2]} 
 \bigg| 	\frac{ \mathbf P\big( Z_s\in z+\Delta_\delta \big)}{s\cdot \nu\big(z+\Delta_\delta\big)} -1 \bigg| \cdot  \sup_{0<s\leq t} \sup_{x\geq  \theta t}   \sup_{|y|\leq Kb(x)} \sup_{\delta\in[1,2]}  \frac{\nu\big(x-y+\Delta_\delta\big)}{\nu\big(x+\Delta_\delta\big)} \cr
 \ar\ar +  \sup_{0<s\leq t} \sup_{x\geq  \theta t}  \sup_{|y|\leq Kb(x)}  \sup_{\delta\in[1,2]}   \bigg|   \frac{\nu\big(x-y+\Delta_\delta\big)}{\nu\big(x+\Delta_\delta\big)} -1\bigg|
 \eeqnn
 which goes to $0$ as $t\to\infty$ because of \eqref{eqn.325} and Proposition~\ref{Prop.205}. 
 
 We start to prove the limit \eqref{eqn.325}. Recall the random walk $\{\widetilde{S}_n\}_{n\geq 0}$ defined in the proof of Proposition~\ref{Prop.305}. 
 By \eqref{eqn.3001}, we have 
 \beqnn
 \mathbf{P}\big( Z_s\in x+\Delta_\delta  \big) 
 \ar=\ar    \mathbf{P}\bigg( \sum_{j=1}^{N_s}Y_j-m\cdot s\in x+\Delta_\delta  \bigg)    \cr 
 \ar=\ar \sum_{n=0}^{\infty}  \frac{s^n
 }{n!}e^{- s} \cdot	\mathbf{P}\big( \widetilde{S}_n \in x-m\cdot(n -s)+\Delta_\delta\big),
 \eeqnn
 which, for any $R> 0$, can be decomposed into the following four terms
 \beqlb
 I_1(R,t,s,x,\delta) \ar:=\ar \sum_{n\geq 0,|n- s|\leq Rb(t)} \frac{s^n}{n!} e^{- s} \cdot \mathbf{P}\big( \widetilde{S}_n \in x-m\cdot(n -s)+\Delta_\delta\big), \label{eqn.327}\\
 I_2(R,t,s,x,\delta) \ar:=\ar \sum_{n\geq 0, n-s<-Rb(t)}  \frac{s^n}{n!}e^{- s} \cdot  \mathbf{P}\big( \widetilde{S}_n \in x-m\cdot(n -s)+\Delta_\delta\big), \label{eqn.328} \\
 I_3(R,t,s,x,\delta) \ar:=\ar \sum_{ Rb(t)< n- s	<
 	\frac{x}{2m}} \frac{s^n}{n!}e^{-s}  \cdot	 \mathbf{P}\big( \widetilde{S}_n \in x-m\cdot(n -s)+\Delta_\delta\big), \label{eqn.329} \\
 I_4(R,t,s,x,\delta) \ar:=\ar \sum_{ n- s\geq \frac{x}{2m}} \frac{s^n}{n!}e^{-s}  \cdot	\mathbf{P}\big( \widetilde{S}_n \in x-m\cdot(n -s)+\Delta_\delta\big).  \label{eqn.330}
 \eeqlb
 It is easy to see that \eqref{eqn.325} follows if we can prove that 
 \beqlb\label{eqn.331}
 \lim_{R\to\infty}\limsup_{t\to\infty}  \sup_{0<s\leq t} \sup_{x\geq \theta t}   \sup_{\delta\in[1,2]} \bigg|	\frac{  I_1(R,t,s,x,\delta)}{s\cdot \nu\big(x+\Delta_\delta\big)} -1 \bigg|   =0. 
 \eeqlb
 and 
 \beqlb\label{eqn.332}
 \lim_{R\to\infty}\limsup_{t\to\infty}  \sup_{0<s\leq t} \sup_{x\geq \theta t}   \sup_{\delta\in[1,2]} \frac{  I_i(R,t,s,x,\delta)}{s\cdot \nu\big(x+\Delta_\delta\big)}    =0, \quad i=2,3,4.
 \eeqlb
 We now prove these four limits one-by-one in the following four parts: 
 \begin{enumerate}
 	\item[$\bullet$]  
 	By \eqref{eqn.327} and the fact that $\mathbf{E}[N_s]= \sum_{n=0}^\infty \frac{s^n}{n!} e^{- s} \cdot n =s$, 
 	\beqnn
 	\bigg|\frac{  I_1(R,t,s,x,\delta)}{s\cdot \nu\big(x+\Delta_\delta\big)} -1 \bigg|
 	\ar\leq \ar \sum_{n\geq 0, |n- s|> Rb(t)} \frac{s^n}{n!} e^{- s} \cdot \frac{n}{s}\cr
 	\ar\ar + \sum_{n\geq0,|n- s|\leq Rb(t)} \frac{s^n}{n!} e^{- s} \cdot \bigg| \frac{\mathbf{P}\big( \widetilde{S}_n \in x-m\cdot(n -s)+\Delta_\delta\big)}{s\cdot \nu\big(x+\Delta_\delta\big)}-\frac{n}{s} \bigg| , 
 	\eeqnn
 	which along with  the triangle inequality induces that  
 	\beqlb\label{eqn.3002}
 	\sup_{0<s\leq t} \sup_{x\geq \theta t}   \sup_{\delta\in[1,2]}  \bigg| 	\frac{  I_1(R,t,s,x,\delta)}{s\cdot \nu\big(x+\Delta_\delta\big)} -1 \bigg| 
 	\leq    \sup_{0< s\leq t} \mathbf{E}\Big[\frac{N_s}{s}\cdot \mathbf{1}_{\{ |N_s- s| \geq Rb(t)\}}\Big] +  \varepsilon(R,t)
 	\eeqlb
 	with 
 	\beqnn
 	\varepsilon(R,t):= \sup_{0<s\leq t}\sup_{n>0, |n- s|\leq Rb(t)}  \sup_{x\geq \theta t}   \sup_{\delta\in[1,2]}  \bigg| \frac{\mathbf{P}\big( \widetilde{S}_n \in x-m\cdot(n -s)+\Delta_\delta\big)}{n\cdot \nu\big(x+\Delta_\delta\big)}-1\bigg| .
 	\eeqnn
 	By Proposition~\ref{Prop.3041}, the first term on the right-hand side of \eqref{eqn.3002} goes to $0$ as $t\to\infty$ and then $R\to\infty$.   
 	Moreover, since $b(t)=o(t)$ as $t\to\infty$, there exist two constants $C, t_0>0$ such that for all $t\geq t_0$, $0<s\leq t$, $ |n- s|\leq Rb(t)$ and $x\geq \theta t$,
 	\beqnn
 	n\leq 2t <\frac{2}{\theta } x \quad \mbox{and} \quad \sup_{0<s\leq t} m\cdot |n-s| \leq C\cdot b(x)  ,
 	\eeqnn 
 	which along with Proposition~\ref{Prop.304} (for small $n$) and \ref{Prop.305} (for large $n$) induces that 
 	\beqnn
 	\lim_{R\to\infty}\limsup_{t\to\infty}\varepsilon(R,t) =0,
 	\eeqnn
 	and then the limit \eqref{eqn.331} follows.
 	
 	\item[$\bullet$] We now prove \eqref{eqn.332} with $i=2$. By \eqref{eqn.328}, 
 	\beqnn
 	\lefteqn{\sup_{0<s\leq t} \sup_{x\geq \theta t}   \sup_{\delta\in[1,2]} \frac{  I_2(R,t,s,x,\delta)}{s\cdot \nu\big(x+\Delta_\delta\big)}  }\ar\ar\cr
 	\ar=\ar \sup_{0<s\leq t} \sup_{x\geq \theta t}   \sup_{\delta\in[1,2]}  \sum_{ 1\leq n<s-Rb(t)}   \frac{s^n}{n!}e^{- s} \cdot	\frac{ \mathbf{P}\big( \widetilde{S}_n \in x-m\cdot(n -s)+\Delta_\delta\big)}{s\cdot \nu\big(x+\Delta_\delta\big)} \cr
 	\ar\leq\ar \sup_{0<s\leq t} \sup_{x\geq \theta t}   \sup_{\delta\in[1,2]}  \sup_{   1\leq n<s-Rb(t)} \frac{ \mathbf{P}\big( \widetilde{S}_n \in x-m\cdot(n -s)+\Delta_\delta\big) }{n\cdot \nu\big(x+\Delta_\delta\big)} \cdot \mathbf{P}\big(N_s\leq s-Rb(t) \big). 
 	\eeqnn
 	By Proposition~\ref{Prop.3041}, we have as $t\to\infty$ and then $R\to\infty$, 
 	\beqnn
 	\sup_{0<s\leq t} \mathbf{P}\big(N_s\leq s-Rb(t) \big) 
 	\leq   \sup_{0<s\leq t} \mathbf{P}\big(|N_s-s|\geq Rb(t) \big)  \to 0. 
 	\eeqnn
 	Additionally, in view of \eqref{eqn.2021}, there exist three constants $C,\varepsilon ,t_0>0$ such that  for all $t\geq t_0$, $0<s\leq t$, $n-s< -Rb(s)$ and $x\geq \theta t$, 
 	\beqnn
 	x-m\cdot(n -s) \geq  \varepsilon n
 	\quad \mbox{and}\quad 
 	\sup_{\delta\in[1,2]} \frac{\nu\big(x-m\cdot(n -s) +\Delta_\delta\big)}{\nu\big(x+\Delta_\delta\big)} \leq C, 
 	\eeqnn
 	which along with the second claim in Proposition~\ref{Prop.305} induces that 
 	\beqnn
 	\lefteqn{\sup_{0<s\leq t} \sup_{x\geq \theta t}   \sup_{\delta\in[1,2]}  \sup_{  n<s-Rb(t)} \frac{ \mathbf{P}\big( \widetilde{S}_n \in x-m\cdot(n -s)+\Delta_\delta\big) }{n\cdot \nu\big(x+\Delta_\delta\big)} }\ar\ar\cr
 	\ar\leq\ar C\cdot \sup_{0<s\leq t} \sup_{x\geq \theta t}   \sup_{\delta\in[1,2]}  \sup_{ n<s-Rb(t)} \frac{ \mathbf{P}\big( \widetilde{S}_n \in x-m\cdot(n -s)+\Delta_\delta\big) }{n\cdot \nu\big(x-m\cdot(n -s)+\Delta_\delta\big)}\cr
 	\ar\leq\ar C\cdot  \sup_{  n \geq 1}\sup_{x\geq \varepsilon n}   \sup_{\delta\in[1,2]}  \frac{ \mathbf{P}\big( \widetilde{S}_n \in  x+\Delta_\delta\big) }{n\cdot \nu\big(x+\Delta_\delta\big)} \leq C, 
 	\eeqnn
 	uniformly in $t\geq t_0$.  
 	Combining these results together, we can get \eqref{eqn.332} with $i=2$ immediately.

 	\item[$\bullet$] We now prove \eqref{eqn.332} with $i=3$. By \eqref{eqn.329}, 
 	\beqlb\label{eqn.335}
 	\lefteqn{\sup_{0<s\leq t} \sup_{x\geq \theta t}   \sup_{\delta\in[1,2]} \frac{  I_3(R,t,s,x,\delta)}{s\cdot \nu\big(x+\Delta_\delta\big)}  }\ar\ar\cr
 	\ar=\ar \sup_{0<s\leq t} \sup_{x\geq \theta t}   \sup_{\delta\in[1,2]}  \sum_{ Rb(t)< n- s	< \frac{x}{2m}}  \frac{s^n}{n!}e^{- s} \cdot	\frac{ \mathbf{P}\big( \widetilde{S}_n \in x-m\cdot(n -s)+\Delta_\delta\big) }{s\cdot \nu\big(x+\Delta_\delta\big)} .
 	\eeqlb
 	There exist some constants $t_0, C>0$ such that for all $t\geq t_0$, $0<s\leq t$, $x\geq \theta t$ and  $Rb(t)\leq n-s<\frac{x}{2m}$,
 	\beqnn
 	\frac{x}{2}\leq x-m\cdot (n-s) \leq x
 	\quad \mbox{and}\quad 
 	n\leq C\cdot x.
 	\eeqnn
 	which along with the second claim in Proposition~\ref{Prop.305} induces that 
 	\beqnn
 	\frac{ \mathbf{P}\big( \widetilde{S}_n \in x-m\cdot(n -s)+\Delta_\delta\big) }{s\cdot \nu\big(x+\Delta_\delta\big)} 
 	\leq C\cdot  \frac{n}{s}  \cdot \frac{\nu\big(x-m\cdot(n -s)+\Delta_\delta\big)}{\nu\big(x+\Delta_\delta\big)}  .
 	\eeqnn
 	By Proposition~\ref{Prop.203}, there exists some constant $C>0$ such that for large $t$,
 	\beqnn
 	\sup_{x\geq \theta t}   \sup_{\delta\in[1,2]}  \frac{ \mathbf{P}\big( \widetilde{S}_n \in x-m\cdot(n -s)+\Delta_\delta\big)}{s\cdot \nu\big(x+\Delta_\delta\big)} 
 	\leq C \cdot  \frac{n}{s} \cdot \exp \Big\{   \frac{m(n-s)}{b(x)} \Big\},
 	\eeqnn
 	uniformly in $0<s\leq t$ and $Rb(t)\leq n-s<\frac{x}{2m}$. 
 	Plugging this back into \eqref{eqn.335}, then using the non-decreasing property of $b$ and the Markov inequality in the third and fourth inequality respectively,
 	\beqlb\label{eqn.337}
 	\sup_{0<s\leq t} \sup_{x\geq \theta t}   \sup_{\delta\in[1,2]} \frac{  I_3(R,t,s,x,\delta)}{s\cdot \nu\big(x+\Delta_\delta\big)}
 	\ar\leq\ar C \sup_{0<s\leq t} \sup_{x\geq \theta t}   \sum_{ Rb(t)\leq n- s	< \frac{x}{2m}}  \frac{s^{n-1}}{(n-1)!}e^{- s} \cdot \exp \Big\{  \frac{m(n-s)}{b(x)} \Big\}\cr
 	\ar\leq\ar  C \sup_{0<s\leq t}    \sum_{n\geq s+ Rb(t)-1}  \frac{s^n}{n!}e^{- s} \cdot \exp \Big\{  \frac{m(n-s+1)}{b(\theta t)} \Big\} \cr
 	\ar=\ar C \sup_{0<s\leq t} \mathbf{E}\Big[ \exp \Big\{  \frac{m(N_s-s+1)}{b(\theta t)} \Big\}\cdot \mathbf{1}_{\{ N_s-s\geq   Rb(t)-1 \}}  \Big]\cr
 	\ar\leq\ar C\cdot \sup_{0<s\leq t}\mathbf{E}\Big[ \exp \Big\{  \frac{m(N_s-s+1)}{b(\theta t)} \Big\}\cdot \mathbf{1}_{\{ N_s-s+1\geq   Rb(\theta t) \}}  \Big]\cr
 	\ar\leq\ar C\cdot e^{-R} \cdot\sup_{0<s\leq t} \mathbf{E}\Big[ \exp \Big\{ (m+1) \frac{(N_s-s+1)}{b(\theta t)}  \Big\}  \Big].
 	\eeqlb
 	By using the Poisson moment generating function and then the inequality $e^z-1-z\leq z^2$ for all $z\in[0,1]$, we have for large $t $, 
 	\beqnn
 	\sup_{0<s\leq t} \mathbf{E}\Big[ \exp \Big\{ (m+1) \frac{(N_s-s+1)}{b(\theta t)}  \Big\}  \Big] 
 	\ar=\ar \sup_{0<s\leq t} \exp\Big\{\frac{m+1}{b(\theta t)} + s\cdot \Big(e^{\frac{m+1}{b(\theta t)}}-1-\frac{m+1}{b(\theta t)} \Big)\Big\} \cr
 	\ar\leq\ar \exp\Big\{ \frac{m+1}{b(\theta t)} +  \frac{(m+1)^2t}{\big(b(\theta t)\big)^2}  \Big\} .
 	\eeqnn 
 	Plugging this into the right-hand side of the last inequality in \eqref{eqn.337} and then using   \eqref{eqn.336},  
 	\beqnn
 	\lim_{t\to\infty}\sup_{0<s\leq t} \sup_{x\geq \theta t}   \sup_{\delta\in[1,2]} \frac{  I_3(R,t,s,x,\delta)}{s\cdot \nu\big(x+\Delta_\delta\big)} 
 	\leq C   \lim_{t\to\infty} \exp\Big\{ -R+ \frac{m+1}{b(\theta t)} +  \frac{(m+1)^2t}{\big(b(\theta t)\big)^2} \Big) \Big\} \leq C\cdot e^{-R},
 	\eeqnn
 	which vanishes as $R\to\infty$. Here we have proved \eqref{eqn.332} with $i=3$.
 	
 	\item[$\bullet$] For  \eqref{eqn.332} with $i=4$, we first prove it holds uniformly in $1/2\leq s\leq t$. By \eqref{eqn.330},   
 	\beqlb\label{eqn.338}
 	I_4(R,t,s,x,\delta) \leq \sum_{ n- s\geq \frac{x}{2m}} \frac{s^n}{n!}e^{-s} = \mathbf{P}\Big( N_s-s  \geq \frac{x}{2m}\Big) . 
 	\eeqlb
 	The Chernoff bound for Poisson random variables gives that for all $x,s>0$, 
 	\beqnn
 	\mathbf{P}\Big( N_s-s  \geq \frac{x}{2m}\Big)  
 	\leq   \exp\Big\{ -s\Big[\Big(1+\frac{x/s}{2m}\Big) \cdot\log \Big(1+\frac{x/s}{2m}\Big) - \frac{x/s}{2m}\Big]  \Big\} .
 	\eeqnn 
 	Note that $(1+1/\theta)\log (1+\theta) -1> 0$, we have $ (1+z)\log(1+z) -z\geq \varepsilon z$ for any $0<\varepsilon<(1+1/\theta)\log (1+\theta) -1$ and $z\geq \theta $. 
 	Thus, for all $0<s\leq t$ and $x\geq \theta t+2m$, 
 	\beqnn
 	\mathbf{P}\Big( N_s-s  \geq \frac{x}{2m}\Big)   \leq   \exp\Big\{  -   \frac{\varepsilon}{2m}\cdot x \Big\}.
 	\eeqnn 
 	Taking this back into \eqref{eqn.338} and then using 	Proposition~\ref{Prop.202}, 
 	\beqnn
 	\limsup_{t\to\infty}\sup_{1/2\leq s\leq t} \sup_{x\geq \theta t}   \sup_{\delta\in[1,2]} \frac{  I_4(R,t,s,x,\delta)}{s\cdot \nu\big(x+\Delta_\delta\big)} 
 	\ar\leq\ar    \limsup_{t\to\infty}  \sup_{1/2\leq s\leq t} \sup_{x\geq \theta t}   \sup_{\delta\in[1,2]} \frac{\exp\big\{ - \frac{\varepsilon}{2m}\cdot x\big\}}{s\cdot \nu\big(x+\Delta_\delta\big)}   \cr
 	\ar \leq\ar   \lim_{t\to\infty}  \sup_{x\geq \theta t}      2
 \cdot     \exp\Big\{  -   \frac{\varepsilon}{2m}\cdot x +C_0\cdot x^{1-1/\alpha}    \Big\}  =0. 
 	\eeqnn
 	It remains to consider the case of $0<s <1/2$. 
 	By \eqref{eqn.330}, 
 	\beqnn
 	\frac{  I_4(R,t,s,x,\delta)}{s\cdot \nu\big(x+\Delta_\delta\big)}   
 	\ar=\ar   \sum_{ n- s\geq \frac{x}{2m}} \frac{s^n}{n!}e^{-s}  \cdot	\frac{\mathbf{P}\big( S_n \in  x+ms +\Delta_\delta\big)}{s\cdot \nu\big(x+\Delta_\delta\big)} \cr
 	\ar=\ar   \sum_{ n- s\geq \frac{x}{2m}} \frac{s^{n-1}}{(n-1)!}e^{-s}  \cdot	\frac{\mathbf{P}\big( S_n  \in  x+ms +\Delta_\delta\big)}{n\cdot \nu\big(x+\Delta_\delta\big)} .
 	\eeqnn
 	By Proposition~\ref{Prop.304} with $K=1$ and $\rho=1/2$, there exists a constant $C>0$ such that for all large $t$,
 	\beqnn
 	\sup_{0<s<1/2} \sup_{x\geq \theta t}   \sup_{\delta\in[1,2]} \frac{  I_4(R,t,s,x,\delta)}{s\cdot \nu\big(x+\Delta_\delta\big)}  
 	\ar\leq \ar \sup_{0<s<1/2}   
 	\sum_{ n- s\geq \frac{\theta t}{2m}} \frac{C\cdot (3s/2)^{n-1}}{(n-1)!}e^{-s} 
 	\leq    \sum_{ n\geq \frac{\theta t}{2m}} \frac{C\cdot (3/4)^{n-1}}{(n-1)!} ,
 	\eeqnn
 	which goes to $0$ as $t\to\infty$. Here we have proved \eqref{eqn.332} with $i=4$. 
 \end{enumerate} 
 
 Finally, we prove \eqref{eqn.3011}.
 By \eqref{eqn.2021}, we have for some $x_0>0$,
 \beqlb  \label{eqn.312}
 \sup_{x\geq x_0} \sup_{y<- b(x)} \sup_{\delta\in[1,2]}\frac{  \nu\big(x-y+\Delta_\delta\big)}{  \nu\big(x+\Delta_\delta\big)} <\infty.
 \eeqlb
 Moreover,  by the preceding result \eqref{eqn.325}, we have as $t\to\infty$, 
 \beqnn
 \sup_{0<s\leq t} \sup_{x\geq \theta t} \sup_{y<-b(x)} \sup_{\delta\in[1,2]}
 \bigg| 	\frac{ \mathbf P\big( Z_s\in x-y+\Delta_\delta \big)}{s\cdot \nu\big(x-y+\Delta_\delta\big)} -1 \bigg| 
 \leq \sup_{0<s\leq t} \sup_{x\geq \theta t}   \sup_{\delta\in[1,2]}
 \bigg| 	\frac{ \mathbf P\big( Z_s\in x+\Delta_\delta \big)}{s\cdot \nu\big(x+\Delta_\delta\big)} -1 \bigg|  \to 0,
 \eeqnn
 which induces that for some $t_0>0$,
 \beqnn
 \sup_{t\geq t_0}  \sup_{0<s\leq t} \sup_{x\geq \theta t} \sup_{y<-b(x)} \sup_{\delta\in[1,2]}
 \frac{ \mathbf P\big( Z_s\in x-y+\Delta_\delta \big)}{s\cdot \nu\big(x-y+\Delta_\delta\big)} \leq 2. 
 \eeqnn
 Combining this estimate with \eqref{eqn.312}, together with
 \eqref{eqn.301} for $-b(x)\leq y\leq Kb(x)$, yields
 \eqref{eqn.3011}. 
 \qed

 \bibliographystyle{plain}
 
 \bibliography{Reference}

 \end{document}